\theoremstyle{plain}
\newtheorem{thm}{\protect\theoremname}
\theoremstyle{remark}
\newtheorem{rem}[thm]{\protect\remarkname}
\theoremstyle{plain}
\newtheorem{prop}[thm]{\protect\propositionname}
\newtheorem{definition}[thm]{\protect\definitionname}
\newcommand{\1}{\mbox{1\hspace{-1mm}I}}
\numberwithin{equation}{section}
\providecommand{\definitionname}{Definition}
\providecommand{\propositionname}{Proposition}
\providecommand{\remarkname}{Remark}
\providecommand{\theoremname}{Theorem}
\begin{document}
\selectlanguage{american}%
\global\long\def\1{\mbox{1\hspace{-1mm}I}}%

\title{Control in Hilbert Space and First Order Mean Field Type Problem}

\author[,1,2]{Alain Bensoussan\footnote{Corresponding author. E-mail: Alain.Bensoussan@utdallas.edu. Research supported by grant
from the National Science Foundation 1905449 and grant from the Hong Kong SAR RGC GRF 11303316.}}
\author[,2]{Henry Hang Cheung\footnote{This work constitutes part of his PhD study at City University of Hong Kong.}}
\author[,3]{Sheung Chi Phillip Yam \footnote{E-mail: scpyam@sta.cuhk.edu.hk. Phillip Yam acknowledges the financial supports from HKGRF-14300717 with the project title ``New kinds of Forward-backward Stochastic Systems with Applications'', HKGRF-14300319 with the project title ``Shape-constrained Inference: Testing
for Monotonicity'', HKGRF-14301321 with the project title ``General Theory for Infinite Dimensional Stochastic Control: Mean Field and Some Classical Problems'', and Germany/Hong Kong Joint Research Scheme Project No. G-HKU701/20 with the project title ``Asymmetry in Dynamically correlated threshold Stochastic volatility model''. He also thanks Columbia University for the kind invitation to be a visiting faculty member in the Department
of Statistics during his sabbatical leave. He also recalled the unforgettable moments and the happiness shared with his beloved
father and used this work in memory of his father's brave battle against liver cancer.}}
\affil[1]{International Center for Decision and Risk Analysis, Jindal School of Management, University of Texas at Dallas}
\affil[2]{School of Data Science, City University of Hong Kong}
\affil[3]{Department of Statistics, The Chinese University of Hong Kong}
\date{\it Dedicated in memory of Mark Davis\\
for his outstanding contribution in control theory.}
\maketitle
\selectlanguage{english}%
\begin{abstract}
We extend the work \cite{bensoussan2019control} by two of the coauthors, which dealt with a deterministic control problem for which the Hilbert space could be generic and investigated a novel form of the `lifting' technique proposed by P. L. Lions. In \cite{bensoussan2019control}, we only showed the local existence and uniqueness of solutions to the FBODEs in the Hilbert space which were associated to the control problems with drift function consisting of the control only. In this article, we establish the global existence and uniqueness of the solutions to the FBODEs in Hilbert space corresponding to control problems with separable drift function which is nonlinear in state and linear in control. We shall also prove the sufficiency of the Pontryagin Maximum Principle and derive the corresponding Bellman equation. Besides, we shall show an analogue in the stationary case. Finally, by using the `lifting' idea as in \cite{stochasticv2,stochasticv1}, we shall apply the result to solve the linear quadratic mean field type control problems, and to show the global existence of the corresponding Bellman equations. 
\end{abstract}
\section{INTRODUCTION}
In recent years, Mean Field Game (MFG) and Mean Field Type Control Theory (MFTCT) are burgeoning. Carmona and Delarue \cite{CarmonaDelaure2} proved the existence of the general forward-backward systems of equations of McKean-Vlasov type using the probabilistic approach, and therefore obtained the classical solution to the master equation arisen from MFG. Their assumptions restricted their application to LQ models only. Cardaliaguet et al. \cite{Lions_Cardaliaguet} proved the existence of the classical solution to the master equation arisen from MFG by PDE techniques and the method of characteristics. To do so, they required the state space to be compact, and the Hamiltonian to be smooth, globally Lipschitz continuous
and to satisfy a certain coercivity condition. Buckdahn et al. \cite{Buckdahn_Peng} adopted
a similar approach to study forward flows, proving that the semigroup of a standard
McKean-Vlasov stochastic differential equation with smooth coefficients is the classical
solution of a particular type of master equation. A crucial assumption was made therein on the smoothness
of the coefficients, which restricted the scope of applications. Gangbo and M\'esz\'aros in \cite{Gangbo_Meszaros} constructed global solutions to the master equation in potential Mean Field Games,
where displacement convexity was used as a substitution for the monotonicity condition. Besides the notion of classical solutions, Mou and Zhang in \cite{Mou_Zhang} gave a notion of weak solution of the master equation arisen from mean field games, using their results of mollifiers on the infinite dimensional space. More results can be found in
the papers of Cosso and Pham \cite{Pham}, Pham and Wei \cite{Pham_Wei} and Djete et al. \cite{Tan}, which concern the Bellman
and Master equations of Mean Field Games and Mean Field Type Control Theory.\\
\hfill\\
By Pontryagin Maximum Principle, MFG and MFTCT are deeply connected to mean field forward backward stochastic differential equations. Pardoux and Tang \cite{Pardoux_Tang}, Antonelli \cite{Antonelli} and Hu and Yong \cite{Hu_Yong} showed the existence and uniqueness of FBSDEs under small time intervals by a fixed point argument. For Markovian FBSDEs, to get rid of the small time issue, Ma et al. \cite{FourStep} employed the Four Step Scheme. They constructed decoupling functions by the use of the classical solutions of quasi-linear PDEs, hence non-degeneracy of the diffusion coefficient and the strong regularity condition on the coefficients were required. Another way to remove time constraints in Markovian FBSDEs was by Delarue \cite{Delaure2002}. Local solutions were patched together by the use of decoupling functions. PDE methods were used to bound the coefficients of the terminal function relative to the initial data in order for the problem to be well-posed. It was later extended to the case of non-Markovian FBSDEs by Zhang in \cite{Zhang}. Moreover, to deal with non-Markovian FBSDEs with arbitrary time length, there was the pure probabilistic method -- method of continuation. It required monotonicity conditions on the coefficients. For seminal works one may consult \cite{Hu_Peng,Peng_Wu,Yong1,Yong2}. With the help of decoupling functions as in \cite{Delaure2002}, but using a BSDE to control the terminal coefficient instead of PDEs, Ma et al. \cite{ZhangMaWuZhang} covered most of the above cases, but in the case of codomain being $\mathbb{R}$. For mean field type FBSDE. A rather general existence result but with a restrictive assumption (boundedness of the coefficients with respect to the state variable) was first done in \cite{CarmonaDelaure} by Carmona and Delarue. Taking advantage of the convexity of the underlying Hamiltonian and applying the continuation method, Carmona and Delarue extended their results in \cite{CarmonaDelaure2}. Bensoussan et al. \cite{BensoussanYamZhang} exploited the condition in \cite{CarmonaDelaure2} and gave weaker conditions for which the results in \cite{CarmonaDelaure2} still hold. By the method of continuation, Ahuja et al. \cite{AhujaRenYang} extended the above result to the FBSDEs which allow coefficients to be functionals of the processes. More details can be found in the monographs \cite{CarmonaDelaureBook,BensoussanYamBook} and \cite{BensoussanFrehseYamMaster,BensoussanFrehseYamInterpretation,YamLQmeanfield}.\\
\hfill\\
We establish the global existence and uniqueness of the solutions to the FBODEs in Hilbert space corresponding to control problems with separable drift function which is nonlinear in state and linear in control. The result can be applied to solve linear quadratic mean field type control problems. We exploit the `lifting to Hilbert space' approach suggested by P. L. Lions in \cite{LionsLecture1,LionsLecture2}, but lift to another Hilbert space instead of the space of random variables. After lifting, the problems are akin to standard control problems, but the drawback is that they are in the infinite dimensional space. By the Pontryagin Maximum Principle, the control problems are reduced to FBODEs in the Hilbert space. In order to accommodate nonlinear settings, we make use of the idea of decoupling. By a Banach fixed point argument, we are able to locally find a decoupling function for the FBODEs. We then derive \textit{a priori} estimates of the decoupling function and extend the solution from local to global as in Delarue \cite{Delaure2002} by the \textit{a priori} estimates. Besides, we also show the analogue in the stationary case. Finally we apply our result to solve linear quadratic mean field type control problems and obtain their corresponding Bellman equations.\\
\hfill\\
The rest of this article is organized as follows. In Section 2, we introduce the model in the Hilbert space. In Section
3, we express the related FBODE and define the decoupling function. \textit{A priori} estimates of the decoupling function are derived in Section 4. In Section 5, we prove the local existence and uniqueness of the FBODE by using a Banach fixed point argument on the function space containing the decoupling function. In Section 6.1, we construct the global solution by our \textit{a priori} estimates. We show the sufficiency of the Maximum Principle in Section 6.2 and write the corresponding Bellman function in Section 6.3. In Section 7, we show the corresponding result in the stationary case. In Section \ref{application_MFTCT}, we apply our result in the Hilbert space as in \cite{stochasticv2}, to solve the optimal control problem, and show the global existence to the corresponding Bellman equation.
\section{THE MODEL}

\subsection{ASSUMPTIONS\label{subsec:ASSUMPTIONS}}

Let $\mathcal{H}$ be a Hilbert space, with scalar product denoted
by $(\cdot,\cdot)$ . We consider a non-linear operator $A(x):\mathcal{H}\mapsto\mathcal{H}$, such that
\begin{equation}
A(0)=0.\label{eq:1-1}
\end{equation}
We assume that $x\mapsto A(x)$ is $C^{1}$ and that $DA(x)=D_{x}A(x)\in\mathcal{L}(\mathcal{H};\mathcal{H})$
satisfies:
\begin{equation}
||DA(x)||\leq\gamma.\label{eq:1-2}
\end{equation}
We use the notation:
\begin{equation}
(DA(x)y,z)=(D_{x}(A(x),z),y).\label{eq:1-3}
\end{equation}
We also assume that $DA(x)$ is differentiable with a second derivative
$D^{2}A(x)\in\mathcal{L}(\mathcal{H};\mathcal{L}(\mathcal{H};\mathcal{H}))$
with the notation:
\begin{equation}
\left\{\begin{aligned}
&(D_{x}(D_{x}(A(x),z),y),w)=(D_{xx}^{2}A(x)(y)w,z),\\
&D_{x}(D_{x}(A(x),z),y)=(D_{xx}^{2}A(x)(y),z).
\end{aligned}\right.\label{eq:1-4}
\end{equation}
We assume the Lipschitz property:
\begin{equation}
||DA(x_{1})-DA(x_{2})||\leq\dfrac{b|x_{1}-x_{2}|}{1+\max(|x_{1}|,|x_{2}|)},\label{eq:1-5}
\end{equation}
 which implies 
\begin{equation}
||D^{2}A(x)||\leq\dfrac{b}{1+|x|}.\label{eq:1-6}
\end{equation}
In the sequel, we shall make restrictions on the size of $b.$ 

We next consider $x\mapsto F(x)$ and $x\mapsto F_{T}(x)$, functionals on $\mathcal{H},$ which are $C^{2},$ with the properties:
\begin{equation}
\left\{\begin{aligned}
&F(0)=0,\,D_{x}F(0)=0,\\
&\nu|\xi|^{2}\leq(D_{xx}^{2}F(x)\xi,\xi)\leq M|\xi|^{2};
\end{aligned}\right.\label{eq:1-7}
\end{equation}
\begin{equation}
\left\{\begin{aligned}
&F_{T}(0)=0,\,D_{x}F{}_{T}(0)=0,\\
&\nu_{T}|\xi|^{2}\leq(D_{xx}^{2}F_{T}(x)\xi,\xi)\leq M_{T}|\xi|^{2},
\end{aligned}\right.\label{eq:170}
\end{equation}
 and $\nu,\nu_{T}>0.$ $\mathcal{H}$ is the state space. In addition, there is a control space $\mathcal{V}$, also a Hilbert space and
a linear bounded operator $B\in\mathcal{L}(\mathcal{V};\mathcal{H})$, an invertible-self adjoint operator on $\mathcal{V}$, denoted
by $N.$ We assume that
\begin{equation}
(BN^{-1}B^{*}\xi,\xi)\geq m|\xi|^{2},\ m>0.\label{eq:1-8}
\end{equation}

\begin{rem}
\label{rem1-1} The assumption (\ref{eq:1-1}) and the first line
assumptions (\ref{eq:1-7}), (\ref{eq:170}) are of course not necessary.
It is just to simplify the calculations. 
\end{rem}

\subsection{THE PROBLEM}

We consider the following control problem. The state evolution is
governed by the differential equation in $\mathcal{H}$:
\begin{equation}
\left\{\begin{aligned}
&\dfrac{dx}{ds}=A(x)+Bv(s),\\
&x(t)=x,
\end{aligned}\right.\label{eq:1-9}
\end{equation}
in which $v(\cdot)$ is in $L^{2}(t,T;\mathcal{V}).$ It is easy to check
that the state $x(\cdot)$ is uniquely defined and belongs to $H^{1}(t,T;\mathcal{H}).$
We define the payoff functional:
\begin{equation}
J_{xt}(v(\cdot)):=\int_{t}^{T}F(x(s))ds+F_{T}(x(T))+\dfrac{1}{2}\int_{t}^{T}(v(s),Nv(s))ds.\label{eq:1-10}
\end{equation}
This functional is continuous and coercive. If $\mathcal{H}$ were
$\mathbb{R}^{n}$, it would be classical that it has a minimum and thus we could write
the necessary conditions of optimality. But the proof does not carry
over to general Hilbert spaces. Moreover, since $A$ is not linear,
we do not have the convexity property, which would guarantee the
existence and uniqueness of a minimum, and thus a solution of the
necessary conditions of optimality. We shall then write the necessary
conditions of optimality and prove directly the existence and uniqueness
of a solution. 

\section{NECESSARY CONDITIONS OF OPTIMALITY}

\subsection{THE SYSTEM }

It is standard to check the following system of forward-backward equations
in $\mathcal{H}$:
\begin{equation}
\left\{\begin{aligned}
&\dfrac{dy}{ds}=A(y)-BN^{-1}B^{*}z(s),\:t<s<T,\\
&-\dfrac{dz}{ds}=(DA(y(s)))^{*}z(s)+DF(y(s)),\\
&y(t)=x,\:z(T)=DF_{T}(y(T)).
\end{aligned}\right.\label{eq:2-1}
\end{equation}
The optimal state is $y(\cdot)$, and $z(\cdot)$ is the adjoint state. The
optimal control is then:
\begin{equation}
u(s)=-N^{-1}B^{*}z(s).\label{eq:2-2}
\end{equation}
The system (\ref{eq:2-1}) expresses the Pontryagin Maximum Principle.
The objective is to study the system of Equations (\ref{eq:2-1}). 

\subsection{DECOUPLING }

We set 
\begin{equation}
z(t)=\Gamma(x,t).\label{eq:2-3}
\end{equation}
It is standard to check that $z(s)=\Gamma(y(s),s)$. So $y(s)$ is
the solution of the differential equation in $\mathcal{H}$:
\begin{equation}
\left\{\begin{aligned}
\dfrac{dy}{ds}&=A(y)-BN^{-1}B^{*}\Gamma(y(s),s),\\
y(t)&=x,
\end{aligned}\right.\label{eq:2-4}
\end{equation}
and $\Gamma(x,s)$ is the solution of the nonlinear partial differential
equation:
\begin{equation}
\left\{\begin{aligned}
-\dfrac{\partial\Gamma}{\partial s}&=D_{x}\Gamma(x)A(x)+(D_{x}A(x))^{*}\Gamma(x)-D_{x}\Gamma(x)BN^{-1}B^{*}\Gamma(x,s)+D_{x}F(x),\\
\Gamma(x,T)&=D_{x}F_{T}(x).
\end{aligned}\right.\label{eq:2-5}
\end{equation}
If $A(x)=Ax$, $F(x)=\dfrac{1}{2}(x,Mx)$ and $F_{T}(x)=\dfrac{1}{2}(x,M_{T}x)$,
then $\Gamma(x,s)=P(s)x,$ and $P(s)$ is solution of the Riccati
equation:
\begin{equation}
\left\{\begin{aligned}
-\dfrac{dP}{ds}&=P(s)A+A^{*}P(s)-P(s)BN^{-1}B^{*}P(s)+M,\\
P(T)&=M_{T}.
\end{aligned}\right.\label{eq:2-6}
\end{equation}

\section{\textit{A PRIORI} ESTIMATES}

\subsection{FIRST ESTIMATE}

We state the first result:
\begin{prop}
\label{prop3-1} We assume (\ref{eq:1-1}), (\ref{eq:1-2}), (\ref{eq:1-5}), (\ref{eq:1-7}), (\ref{eq:170}),
(\ref{eq:1-8}) and 
\begin{equation}
\dfrac{b^{2}}{16}<(m-k)(\nu-k),\:0<k<\min(m,\nu),\label{eq:2-7}
\end{equation}
 then we have the \textit{a priori} estimate:
\begin{equation}
|\Gamma(x,t)|\leq|x|\left(\dfrac{M_{T}^{2}}{\nu_{T}}+\dfrac{\gamma^{2}+M^{2}}{k}(T-t)\right).\label{eq:2-8}
\end{equation}
\end{prop}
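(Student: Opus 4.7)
The plan is to derive the estimate on $|z(t)|=|\Gamma(x,t)|$ by combining an energy identity, obtained by pairing the forward and backward components of (\ref{eq:2-1}), with a direct bound from the integrated adjoint equation. I would first compute $\tfrac{d}{ds}(z(s),y(s))$ using (\ref{eq:2-1}), making use of the adjoint notation (\ref{eq:1-3}), and integrate from $t$ to $T$:
\[
(z(t),x) = (DF_T(y(T)),y(T)) + \int_t^T \!\bigl[(DA(y)y - A(y),z) + (DF(y),y) + (BN^{-1}B^*z,z)\bigr]\,ds.
\]
The nonlinearity of $A$ leaves the cross term $(DA(y)y - A(y),z)$; using $A(0)=0$ and Taylor's formula one obtains $DA(y)y - A(y) = \int_0^1 \tau\, D^2 A(\tau y)(y,y)\,d\tau$, so the second-derivative bound (\ref{eq:1-6}) yields $|DA(y)y - A(y)| \leq \tfrac{b|y|^2}{2(1+|y|)} \leq \tfrac{b|y|}{2}$.

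Next, applying the coercivity assumptions from (\ref{eq:1-7}), (\ref{eq:170}), (\ref{eq:1-8}) --- namely $(DF_T(y(T)),y(T))\geq\nu_T|y(T)|^2$, $(DF(y),y)\geq\nu|y|^2$, and $(BN^{-1}B^*z,z)\geq m|z|^2$ --- and controlling the cross term by Young's inequality $\tfrac{b}{2}|y||z| \leq \tfrac{\alpha}{2}|z|^2 + \tfrac{b^2}{8\alpha}|y|^2$, the condition (\ref{eq:2-7}) is exactly the one allowing $\alpha$ to be chosen so that both $m - \tfrac{\alpha}{2} \geq k$ and $\nu - \tfrac{b^2}{8\alpha} \geq k$ simultaneously. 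This produces
\[
(z(t),x) \geq \nu_T|y(T)|^2 + k\!\int_t^T (|y|^2+|z|^2)\,ds,
\]
and since $(z(t),x)\leq|z(t)||x|$, it yields the a priori bounds $|y(T)|^2 \leq |z(t)||x|/\nu_T$ and $\int_t^T(|y|^2+|z|^2)\,ds \leq |z(t)||x|/k$.

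Finally, integrating the backward equation in (\ref{eq:2-1}) and using $|DF_T(y)|\leq M_T|y|$, $\|DA(y)\|\leq\gamma$, and $|DF(y)|\leq M|y|$ followed by Cauchy--Schwarz in time,
\[
|z(t)| \leq M_T|y(T)| + (T-t)^{1/2}\!\left(\int_t^T (\gamma|z|+M|y|)^2\, ds\right)^{1/2}.
\]
Since $(\gamma|z|+M|y|)^2 \leq (\gamma^2+M^2)(|y|^2+|z|^2)$, the right-hand side is bounded by $M_T\sqrt{|z(t)||x|/\nu_T} + \sqrt{(T-t)(\gamma^2+M^2)|z(t)||x|/k}$; squaring and rearranging this scalar inequality in $\sqrt{|z(t)|/|x|}$ delivers (\ref{eq:2-8}).

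The main obstacle will be step two: identifying the algebraic condition (\ref{eq:2-7}) as the sharp threshold that absorbs the nonlinear cross-term $\tfrac{b}{2}|y||z|$ into the coercive quadratic terms with uniform margin $k$, independent of the trajectory $(y,z)$. A secondary technical point is the careful bookkeeping of the Cauchy--Schwarz constants in the last step to extract exactly the form of (\ref{eq:2-8}) rather than a weaker variant with additional cross terms.
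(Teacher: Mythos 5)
Your strategy coincides with the paper's up to the very last step: the duality identity obtained from $\tfrac{d}{ds}(y(s),z(s))$, the bound $|A(y)-DA(y)y|\le\tfrac{b}{2}|y|$ on the nonlinear remainder, the absorption of the cross term $\tfrac b2|y||z|$ via (\ref{eq:2-7}) to reach $(x,z(t))\ge\nu_T|y(T)|^2+k\int_t^T(|y|^2+|z|^2)ds$, and the integrated adjoint equation to control $|z(t)|$ from above. The gap is in how you close. Substituting the two separate consequences $|y(T)|^2\le|z(t)||x|/\nu_T$ and $\int_t^T(|y|^2+|z|^2)ds\le|z(t)||x|/k$ into the upper bound and dividing by $\sqrt{|z(t)|}$ yields
\[
|z(t)|\le|x|\left(\dfrac{M_T}{\sqrt{\nu_T}}+\sqrt{\dfrac{(\gamma^2+M^2)(T-t)}{k}}\right)^{2},
\]
which exceeds the claimed $\alpha_t|x|$ by the cross term $2\tfrac{M_T}{\sqrt{\nu_T}}\sqrt{(\gamma^2+M^2)(T-t)/k}\,|x|$. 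You flag this yourself as a bookkeeping issue, but it is not one: the square of a sum is genuinely larger than the sum of squares, so this route cannot produce (\ref{eq:2-8}) as stated. The paper avoids it by never dividing through: it bounds $|x|\,|z(t)|\le|x|\big(M_T|y(T)|+\gamma\int_t^T|z|+M\int_t^T|y|\big)$ term by term with Young's inequalities whose weights are matched to the coercive lower bound, i.e.\ $\le\tfrac12\big(\nu_T|y(T)|^2+k\int_t^T(|y|^2+|z|^2)\big)+\tfrac{|x|^2}{2}\alpha_t$. Chaining this with $(x,z(t))\ge\nu_T|y(T)|^2+k\int_t^T(|y|^2+|z|^2)$ first gives $\nu_T|y(T)|^2+k\int_t^T(|y|^2+|z|^2)\le|x|^2\alpha_t$, and feeding that back gives $|x||z(t)|\le|x|^2\alpha_t$ with no cross term. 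The sharp constant matters downstream, since $\alpha_t$ is reused quantitatively in (\ref{eq:2-14}) and Proposition \ref{prop3-2}.

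A secondary point: your justification of $|DA(y)y-A(y)|\le\tfrac b2|y|$ via $\int_0^1\tau D^2A(\tau y)(y,y)\,d\tau$ and (\ref{eq:1-6}) only yields $b|y|^2\int_0^1\tfrac{\tau\,d\tau}{1+\tau|y|}=b\big(|y|-\log(1+|y|)\big)$, which is neither $\le\tfrac{b|y|^2}{2(1+|y|)}$ (fails for small $|y|$) nor $\le\tfrac b2|y|$ (fails for large $|y|$). The clean route is the paper's: write $A(y)-DA(y)y=\int_0^1(DA(\theta y)-DA(y))y\,d\theta$ and use (\ref{eq:1-5}) directly, so that $\|DA(\theta y)-DA(y)\|\le\tfrac{b(1-\theta)|y|}{1+|y|}$ and the factor $\tfrac12$ comes from integrating $1-\theta$.
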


\begin{proof}
From the system (\ref{eq:2-1}), we obtain:
\[
\dfrac{d}{ds}(y(s),z(s))=(A(y(s))-BN^{-1}B^{*}z(s),z(s))-\left((DA(y(s)))^{*}z(s)+DF(y(s)),y(s)\right).
\]
Integration yields:
\begin{equation}
\begin{aligned}
&\ (D_{x}F_{T}(y(T)),y(T))+\int_{t}^{T}(BN^{-1}B^{*}z(s),z(s))ds+\int_{t}^{T}(D_{x}F(y(s)),y(s))ds\\
=&\ (x,z(t))+\int_{t}^{T}\left(A(y(s))-DA(y(s))y(s),z(s)\right)ds.
\end{aligned}\label{eq:2-9}
\end{equation}
We note that 
\begin{equation}
|A(x)-DA(x)x|\leq\dfrac{b}{2}|x|;\label{eq:2-10}
\end{equation}
indeed,
\[
A(x)-DA(x)x=\int_{0}^{1}(DA(\theta x)-DA(x))x\,d\theta,
\]
and from the assumption (\ref{eq:1-5}), we get:
\[
|A(x)-DA(x)x|\leq\int_{0}^{1}\dfrac{b|x|^{2}(1-\theta)}{1+|x|}d\theta,
\]
 which implies (\ref{eq:2-10}). Therefore, from (\ref{eq:2-9}), we
obtain, using assumptions:
\[
(x,z(t))\geq\nu_{T}|y(T)|^{2}+m\int_{t}^{T}|z(s)|^{2}ds+\nu\int_{t}^{T}|y(s)|^{2}ds-\dfrac{b}{2}\int_{t}^{T}|y(s)||z(s)|ds.
\]
 Using (\ref{eq:2-7}), we can state:
\begin{equation}
(x,z(t))\geq\nu_{T}|y(T)|^{2}+k\int_{t}^{T}(|y(s)|^{2}+z(s)|^{2})ds.\label{eq:2-11}
\end{equation}
On the other hand, from the second equation (\ref{eq:2-1}), we write:
\[
z(t)=z(T)+\int_{t}^{T}\left((DA(y(s)))^{*}z(s)+DF(y(s))\right)ds,
\]
 hence 
\begin{align*}
(x.z(t))&=(x,DF_{T}(y(T))+\int_{t}^{T}(DA(y(s))x,z(s))ds+\int_{t}^{T}(x,DF(y(s))ds,\\
(x.z(t))&\leq|x||z(t)|\leq|x|(M_{T}|y(T)|+\int_{t}^{T}\gamma|z(s)|ds+\int_{t}^{T}|y(t)|dt)\\
&\leq\dfrac{1}{2}\left(\nu_{T}|y(T)|^{2}+k\int_{t}^{T}(|y(s)|^{2}+z(s)|^{2})ds\right)+\dfrac{|x|^{2}}{2}\left(\dfrac{M_{T}^{2}}{\nu_{T}}+\dfrac{\gamma^{2}+M^{2}}{k}(T-t)\right).
\end{align*}
From this relation and (\ref{eq:2-11}), we get:
\[
\nu_{T}|y(T)|^{2}+k\int_{t}^{T}(|y(s)|^{2}+z(s)|^{2})ds\leq|x|^{2}\left(\dfrac{M_{T}^{2}}{\nu_{T}}+\dfrac{\gamma^{2}+M^{2}}{k}(T-t)\right).
\]
Therefore,
\[
|x||z(t)|\leq|x|^{2}\left(\dfrac{M_{T}^{2}}{\nu_{T}}+\dfrac{\gamma^{2}+M^{2}}{k}(T-t)\right),
\]
 and the result follows. We write 
\begin{equation}
\alpha_{t}=\dfrac{M_{T}^{2}}{\nu_{T}}+\dfrac{\gamma^{2}+M^{2}}{k}(T-t).\label{eq:2-12}
\end{equation}
Note that in the system (\ref{eq:2-1}), we can write 

\begin{equation}
|z(s)|\leq\alpha_{s}|y(s)|.\label{eq:2-13}
\end{equation}
\end{proof}

\subsection{SECOND ESTIMATE\label{subsec:SECOND-ESTIMATE}}

The second estimate concerns the gradient $D_{x}\Gamma(x,t).$ We have
the following result:
\begin{prop}
\label{prop3-2} We make the assumptions of Proposition \ref{prop3-1}
and 
\begin{equation}
\nu-b\alpha_{0}>0,\label{eq:2-14}
\end{equation}
 then we have the \textit{a priori} estimate:
\begin{equation}
||D_{x}\Gamma(x,t)||\leq\dfrac{M_{T}^{2}}{\nu_{T}}+\dfrac{\gamma^{2}}{m}(T-t)+\int_{t}^{T}\dfrac{(M+b\alpha_{s})^{2}}{\nu-b\alpha_{s}}ds.\label{eq:2-15}
\end{equation}
\end{prop}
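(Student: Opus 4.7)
The plan is to adapt the energy identity of Proposition \ref{prop3-1} to the linearisation of (\ref{eq:2-1}) in the initial datum. Fix an arbitrary direction $\xi\in\mathcal{H}$ and set $\eta(s):=D_{x}y(s)\,\xi$, $\zeta(s):=D_{x}z(s)\,\xi$. Differentiating (\ref{eq:2-1}) along $\xi$, the pair $(\eta,\zeta)$ solves the linearised forward--backward system
\[
\left\{\begin{aligned}
\dfrac{d\eta}{ds}&=DA(y)\eta-BN^{-1}B^{*}\zeta,\\
-\dfrac{d\zeta}{ds}&=(D^{2}A(y)\eta)^{*}z+(DA(y))^{*}\zeta+D^{2}F(y)\eta,\\
\eta(t)&=\xi,\quad \zeta(T)=D^{2}F_{T}(y(T))\eta(T).
\end{aligned}\right.
\]
Since $D_{x}\Gamma(x,t)\,\xi=\zeta(t)$, the task is to bound $|\zeta(t)|$ linearly in $|\xi|$ by the quantity on the right of (\ref{eq:2-15}), and then to take the supremum over unit $\xi$.

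The first step is the energy identity. Computing $\frac{d}{ds}(\eta,\zeta)$, the two $DA(y)$-contributions cancel via the adjoint relation, leaving
\[
\frac{d}{ds}(\eta,\zeta)=-(BN^{-1}B^{*}\zeta,\zeta)-(D^{2}A(y)(\eta)\eta,z)-(D^{2}F(y)\eta,\eta).
\]
Integrating on $[t,T]$, inserting the terminal condition and using the convexity/coercivity bounds (\ref{eq:1-7}), (\ref{eq:170}), (\ref{eq:1-8}), the only indefinite term is the $D^{2}A$ cross term. I would control it via (\ref{eq:1-6}) together with the a priori bound (\ref{eq:2-13}): since $|y|/(1+|y|)\leq 1$, one gets $|(D^{2}A(y)(\eta)\eta,z)|\leq b\alpha_{s}|\eta|^{2}$. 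Because $\alpha_{s}$ is non-increasing, assumption (\ref{eq:2-14}) secures $\nu-b\alpha_{s}>0$ on $[t,T]$ and yields the coercive lower bound
\[
(\xi,\zeta(t))\geq \nu_{T}|\eta(T)|^{2}+\int_{t}^{T}\bigl[m|\zeta|^{2}+(\nu-b\alpha_{s})|\eta|^{2}\bigr]ds.
\]

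For the matching upper bound I would integrate the backward equation to obtain $|\zeta(t)|\leq M_{T}|\eta(T)|+\int_{t}^{T}[(M+b\alpha_{s})|\eta|+\gamma|\zeta|]\,ds$, using the same $b\alpha_{s}$-bound on the $D^{2}A$-piece and the upper bound in (\ref{eq:1-7}) on $D^{2}F$. Multiplying through by $|\xi|$ and applying Young's inequality term by term, with weights $\nu_{T}$, $m$, and $\nu-b\alpha_{s}$ so as to reproduce exactly the coercive quantities above, the right-hand side of the resulting inequality splits as one half of the lower bound plus
\[
\frac{|\xi|^{2}}{2}\Bigl(\frac{M_{T}^{2}}{\nu_{T}}+\frac{\gamma^{2}}{m}(T-t)+\int_{t}^{T}\frac{(M+b\alpha_{s})^{2}}{\nu-b\alpha_{s}}\,ds\Bigr).
\]
Since $(\xi,\zeta(t))\leq|\xi||\zeta(t)|$, the first piece is absorbed into the coercive lower bound, leaving $|\zeta(t)|\leq|\xi|$ times the quantity displayed in (\ref{eq:2-15}), and the operator-norm estimate follows.

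The main obstacle is the indefinite cross term $(D^{2}A(y)(\eta)\eta,z)$, which is of order $|\eta|^{2}$ and thus competes directly with the coercivity $\nu|\eta|^{2}$ provided by $D^{2}F$. The argument only closes because Proposition \ref{prop3-1} converts the $z$-factor into $\alpha_{s}|y|$, which the bound (\ref{eq:1-6}) combines with $|y|/(1+|y|)\leq 1$ to give the clean $b\alpha_{s}$ coefficient, and because (\ref{eq:2-14}) guarantees that $\nu-b\alpha_{s}$ stays strictly positive throughout $[t,T]$ so that the Young-inequality weight $(M+b\alpha_{s})^{2}/(\nu-b\alpha_{s})$ remains integrable.
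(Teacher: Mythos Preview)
Your proposal is correct and follows essentially the same approach as the paper: linearise the forward--backward system in the direction $\xi$, derive the coercive energy identity $(\xi,\zeta(t))\geq\nu_{T}|\eta(T)|^{2}+\int_{t}^{T}[m|\zeta|^{2}+(\nu-b\alpha_{s})|\eta|^{2}]\,ds$ by controlling the $D^{2}A$ cross term via $|z(s)|\leq\alpha_{s}|y(s)|$ and (\ref{eq:1-6}), pair it with the integrated backward equation bound $|\zeta(t)|\leq M_{T}|\eta(T)|+\int_{t}^{T}[(M+b\alpha_{s})|\eta|+\gamma|\zeta|]\,ds$, and close with weighted Young's inequality exactly as in Proposition~\ref{prop3-1}. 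Your write-up is in fact slightly more explicit than the paper's, which simply refers back to the combination step of Proposition~\ref{prop3-1} without spelling out the weights.
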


\begin{proof}
We differentiate the system (\ref{eq:2-1}) with respect to $x.$
We denote 
\begin{equation}
\mathcal{Y}(s)=D_{x}y(s),\,\mathcal{Z}(s)=D_{x}z(s).\label{eq:2-16}
\end{equation}
Differentiating (\ref{eq:2-1}), we can write, by recalling notation (\ref{eq:1-4}):
\begin{equation}
\begin{aligned}
\dfrac{d}{ds}\mathcal{Y}(s)\xi&=D_{x}A(y(s))\mathcal{Y}(s)\xi-BN^{-1}B^{*}\mathcal{Z}(s)\xi-\dfrac{d}{ds}\mathcal{Z}(s)\xi\\
&=(D_{xx}^{2}A(y(s))\mathcal{Y}(s)(\xi),z(s))+(DA(y(s)))^{*}\mathcal{Z}(s)\xi+D_{xx}^{2}F(y(s))\mathcal{Y}(s)\xi,
\end{aligned}\label{eq:2-17}
\end{equation}
\begin{equation}
\mathcal{Y}(t)\xi=\xi,\;\mathcal{Z}(T)\xi=D_{xx}^{2}F_{T}(y(T))\mathcal{Y}(T)\xi.\label{eq:2-170}
\end{equation}
We compute $\dfrac{d}{ds}(\mathcal{Y}(s)\xi,\mathcal{Z}(s)\xi)$ and
then integrate. We obtain that
\begin{equation}
\begin{aligned}
(\mathcal{Z}(t)\xi,\xi)&=(D_{xx}^{2}F_{T}(y(T))\mathcal{Y}(T)\xi,\mathcal{Y}(T)\xi)+\int_{t}^{T}(BN^{-1}B^{*}\mathcal{Z}(s)\xi,\mathcal{Z}(s)\xi)ds\\
&\quad +\int_{t}^{T}(D_{xx}^{2}F(y(s))\mathcal{Y}(s)\xi,\mathcal{Y}(s)\xi)ds+\int_{t}^{T}(D_{xx}^{2}A(y(s))\mathcal{Y}(s)(\xi)\mathcal{Y}(s)\xi,z(s))\\
&\geq\nu_{T}|\mathcal{Y}(T)\xi|^{2}+m\int_{t}^{T}|\mathcal{Z}(s)\xi|^{2}ds+\int_{t}^{T}(\nu-b\alpha_{s})|\mathcal{Y}(s)\xi|^{2}ds.
\end{aligned}\label{eq:3-1}
\end{equation}
 Also, from the second line of (\ref{eq:2-17}),
\begin{equation}
|\mathcal{Z}(t)\xi|\leq M_{T}|\mathcal{Y}(T)\xi|+\int_{t}^{T}(M+b\alpha_{s})|\mathcal{Y}(s)\xi|ds+\gamma\int_{t}^{T}|\mathcal{Z}(s)\xi|ds.\label{eq:3-2}
\end{equation}
Combining (\ref{eq:3-1}) and (\ref{eq:3-2}) as in Proposition \ref{prop3-1},
we conclude that
\[
|\mathcal{Z}(t)\xi|\leq|\xi|\left(\dfrac{M_{T}^{2}}{\nu_{T}}+\dfrac{\gamma^{2}}{m}(T-t)+\int_{t}^{T}\dfrac{(M+b\alpha_{s})^{2}}{\nu-b\alpha_{s}}ds\right).
\]
Since $\mathcal{Z}(t)\xi=$$D_{x}\Gamma(x,t)$, the result (\ref{eq:2-15})
follows immediately. The proof is complete. 
\end{proof}
We shall call 
\begin{equation}
\beta_{t}=\dfrac{M_{T}^{2}}{\nu_{T}}+\dfrac{\gamma^{2}}{m}(T-t)+\int_{t}^{T}\dfrac{(M+b\alpha_{s})^{2}}{\nu-b\alpha_{s}}ds.\label{eq:3-3}
\end{equation}
Since 
\[
\Gamma(x,t)=\int_{0}^{1}D_{x}\Gamma(\theta x,t)x\,d\theta,
\]
 we also have:
\begin{equation}
|\Gamma(x,t)|\leq\beta_{t}|x|,\label{eq:3-4}
\end{equation}
 so in fact,
\begin{equation}
\left\{\begin{aligned}
&|\Gamma(x,t)|\leq\min(\alpha_{t},\beta_{t})|x|,\\
&||D_{x}\Gamma(x,t)||\leq\beta_{t}.
\end{aligned}\right.\label{eq:3-5}
\end{equation}

\section{LOCAL SOLUTION }

\subsection{FIXED POINT APPROACH }

We want to solve (\ref{eq:2-1}) by a fixed point approach. Suppose
we have a function $\lambda(x,t)$ with values in $\mathcal{H}$ such
that:
\begin{equation}
\left\{\begin{aligned}
&|\lambda(x,t)|\leq\mu_{t}|x|,\\
&||D_{x}\lambda(x,t)||\leq\rho_{t},
\end{aligned}\right.\label{eq:4-1}
\end{equation}
 where $\mu_{t}$ and $\rho_{t}$ are bounded functions on $[T-h,T],$
for some convenient $h.$ These functions will be chosen conveniently
in the sequel, with $\mu_{t}<\rho_{t}.$ We then solve 
\begin{equation}
\left\{\begin{aligned}
&\dfrac{d}{ds}y(s)=A(y(s))-BN^{-1}B^{*}\lambda(y(s),s),\\
&y(t)=x.
\end{aligned}\right.\label{eq:4-2}
\end{equation}
This differential equation defines uniquely $y(s),$ thanks to the
assumptions (\ref{eq:4-1}). We then define 
\begin{equation}
\Lambda(x,t):=D_{x}F_{T}(y(T))+\int_{t}^{T}(DA(y(s)))^{*}\lambda(y(s),s)ds+\int_{t}^{T}D_{x}F(y(s))ds.\label{eq:4-3}
\end{equation}
We want to show that $\mu_{t}$ and $\rho_{t}$ can be chosen such that 
\begin{equation}
|\Lambda(x,t)|\leq\mu_{t}|x|,\ ||D_{x}\Lambda(x,t)||\leq\rho_{t},\label{eq:4-4}
\end{equation}
 and that the map $\lambda\mapsto\Lambda$ has a fixed point.
This will be only possible when $t$ remains close to $T,$ namely
$T-h<t<T,$ with $h$ small. 

\subsection{CHOICE OF FUNCTIONS $\mu_{t}$ AND $\rho_{t}$}

From (\ref{eq:4-2}), we obtain:
\[
\dfrac{d}{ds}|y(s)|\leq\left|\dfrac{d}{ds}y(s)\right|\leq(\gamma+||BN^{-1}B^{*}||\mu_{s})|y(s)|,
\]
 which implies 
\begin{equation}
|y(s)|\leq|x|\exp\left(\int_{t}^{s}(\gamma+||BN^{-1}B^{*}||\mu_{\tau})d\tau\right),\label{eq:4-5}
\end{equation}
and thus from (\ref{eq:4-3}) it follows that
\[
|\Lambda(x,t)|\leq M_{T}|y(T)|+\int_{t}^{T}(M+\gamma\mu_{s})|y(s)|ds.
\]
Using (\ref{eq:4-5}), we obtain:
\[
|\Lambda(x,t)|\leq|x|\left(M_{T}\exp\left(\int_{t}^{T}(\gamma+||BN^{-1}B^{*}||\mu_{\tau})d\tau\right)+\int_{t}^{T}(M+\gamma\mu_{s})\exp\left(\int_{t}^{s}(\gamma+||BN^{-1}B^{*}||\mu_{\tau})d\tau\right)ds\right).
\]
To obtain the first inequality (\ref{eq:4-4}), we must choose the function
$\mu_{t}$ such that 
\begin{equation}
\mu_{t}=M_{T}\exp\left(\int_{t}^{T}(\gamma+||BN^{-1}B^{*}||\mu_{\tau})d\tau\right)+\int_{t}^{T}(M+\gamma\mu_{s})\exp\left(\int_{t}^{s}(\gamma+||BN^{-1}B^{*}||\mu_{\tau})d\tau\right)ds.\label{eq:4-6}
\end{equation}
 So $\mu_{t}$ must be solution of the differential equation of Riccati
type:
\begin{equation}
\left\{\begin{aligned}
\dfrac{d}{dt}\mu_{t}&=-||BN^{-1}B^{*}||\mu_{t}^{2}-2\gamma\mu_{t}-M,\\
\mu_{T}&=M_{T}.
\end{aligned}\right.\label{eq:4-60}
\end{equation}
To proceed, we need to assume that
\begin{equation}
\gamma^{2}<M||BN^{-1}B^{*}||,\label{eq:4-7}
\end{equation}
and we define $\mu_{t}$ bt the formula:
\begin{equation}
\arctan\:\dfrac{\mu_{t}||BN^{-1}B^{*}||+\gamma}{\sqrt{M||BN^{-1}B^{*}||-\gamma^{2}}}=\arctan\:\dfrac{M_{T}||BN^{-1}B^{*}||+\gamma}{\sqrt{M||BN^{-1}B^{*}||-\gamma^{2}}}+\left(\sqrt{M||BN^{-1}B^{*}||-\gamma^{2}}\right)(T-t).\label{eq:4-8}
\end{equation}
For $h>0$, define $\theta_{h}$ with 
\begin{equation}
\arctan\:\dfrac{\theta_{h}||BN^{-1}B^{*}||+\gamma}{\sqrt{M||BN^{-1}B^{*}||-\gamma^{2}}}=\arctan\:\dfrac{M_{T}||BN^{-1}B^{*}||+\gamma}{\sqrt{M||BN^{-1}B^{*}||-\gamma^{2}}}+\left(\sqrt{M||BN^{-1}B^{*}||-\gamma^{2}}\right)h.\label{eq:4-80}
\end{equation}
The number $h$ must be small enough to guarantee that
\begin{equation}
\arctan\:\dfrac{M_{T}||BN^{-1}B^{*}||+\gamma}{\sqrt{M||BN^{-1}B^{*}||-\gamma^{2}}}+\left(\sqrt{M||BN^{-1}B^{*}||-\gamma^{2}}\right)h<\dfrac{\pi}{2}.\label{eq:4-9}
\end{equation}
Formula (\ref{eq:4-8}) defines uniquely $\mu_{t}$ for $T-h<t<T.$
It is decreasing in $t$, with $M_{T}<\mu_{t}<\theta_{h}$.

Therefore, for $T-h<t<T,$ we have defined by (\ref{eq:4-3}) a function
$\Lambda(x,t)$ which satisfies the first condition (\ref{eq:4-4}),
with $\mu_{t}$ defined by equation (\ref{eq:4-8}). We turn now to
the definition of $\rho_{t}.$ Define $\mathcal{Y}(s)=D_{x}y(s),$
see (\ref{eq:4-2}). We have:
\begin{equation}
\left\{\begin{aligned}
\dfrac{d}{ds}\mathcal{Y}(s)&=\left(DA(y(s))-BN^{-1}B^{*}D_{x}\lambda(y(s),s)\right)\mathcal{Y}(s),\\
\mathcal{Y}(t)&=I.
\end{aligned}\right.\label{eq:4-10}
\end{equation}
 We obtain, by techniques already used:
\begin{equation}
||\mathcal{Y}(s)||\leq\exp\left(\int_{t}^{s}(\gamma+||BN^{-1}B^{*}||\rho_{\tau})d\tau\right).\label{eq:4-11}
\end{equation}
We then differentiate $\Lambda(x,t)$ in $x,$ see (\ref{eq:4-3}).
We get:
\begin{align*}
D_{x}\Lambda(x,t)&=D_{xx}^{2}F_{T}(y(T))\mathcal{Y}(T)+\int_{t}^{T}(D_{xx}^{2}A(y(s))\mathcal{Y}(s),\lambda(y(s),s))ds,\\
&\quad+\int_{t}^{T}(D_{x}A(y(s)))^{*}D_{x}\lambda(y(s),s)\mathcal{Y}(s)ds+\int_{t}^{T}D_{xx}^{2}F(y(s))\mathcal{Y}(s)ds,
\end{align*}
 and we obtain: 
\[
||D_{x}\Lambda(x,t)||\leq M_{T}||\mathcal{Y}(T)||+\int_{t}^{T}(M+b\mu_{s}+\gamma\rho_{s})||\mathcal{Y}(s)||ds.
\]
 Since $T-h<t<T,$ we can majorize, using also (\ref{eq:4-11}), to
obtain:
\begin{equation}
||D_{x}\Lambda(x,t)||\leq M_{T}\exp\left(\int_{t}^{T}(\gamma+||BN^{-1}B^{*}||\rho_{s})ds\right)+\int_{t}^{T}(M+b\theta_{h}+\gamma\rho_{s})\exp\left(\int_{t}^{s}(\gamma+||BN^{-1}B^{*}||\rho_{\tau})d\tau\right)ds.\label{eq:4-12}
\end{equation}
We are thus led to looking for $\rho_{t}$ solution of 
\begin{equation}
\rho_{t}=M_{T}\exp\left(\int_{t}^{T}(\gamma+||BN^{-1}B^{*}||\rho_{s})ds\right)+\int_{t}^{T}(M+b\theta_{h}+\gamma\rho_{s})\exp\left(\int_{t}^{s}(\gamma+||BN^{-1}B^{*}||\rho_{\tau})d\tau\right)ds.\label{eq:4-13}
\end{equation}
 This equation is similar to the one definning $\mu_{t},$ see (\ref{eq:4-6}),
with the change of $M$ into $M+b\theta_{h}.$ Hence, by analogy with
(\ref{eq:4-8}), we can assert that:
\begin{equation}
\begin{aligned}
\arctan\:\dfrac{\rho_{t}||BN^{-1}B^{*}||+\gamma}{\sqrt{(M+b\theta_{h})||BN^{-1}B^{*}||-\gamma^{2}}}&=\arctan\:\dfrac{M_{T}||BN^{-1}B^{*}||+\gamma}{\sqrt{(M+b\theta_{h})||BN^{-1}B^{*}||-\gamma^{2}}}\\
&\quad+\left(\sqrt{(M+b\theta_{h})||BN^{-1}B^{*}||-\gamma^{2}}\right)(T-t).
\end{aligned}\label{eq:4-14}
\end{equation}
In order to get a bounded solution for $\rho_{t},$ we need that the
right hand side of (\ref{eq:4-14}) be smaller than $\dfrac{\pi}{2}.$
We need to restrict $h$ more than with (\ref{eq:4-9}), namely:
\begin{equation}
\arctan\:\dfrac{M_{T}||BN^{-1}B^{*}||+\gamma}{\sqrt{M||BN^{-1}B^{*}||-\gamma^{2}}}+\left(\sqrt{(M+b\theta_{h})||BN^{-1}B^{*}||-\gamma^{2}}\right)h<\dfrac{\pi}{2}.\label{eq:4-15}
\end{equation}
Then the function $\rho_{t}$ is well defined on $(T-h,T],$ by formula
(\ref{eq:4-14}) and the function $\Lambda(x,t)$ defined by (\ref{eq:4-3}), for $t\in(T-h,T]$ satisfies (\ref{eq:4-4}) if $\lambda(x,t)$
satisfies (\ref{eq:4-1}). We also claim that 
\begin{equation}
\rho_{t}>\mu_{t}.\label{eq:4-16}
\end{equation}
 Indeed, $\rho_{t}$ satisfies the Riccati equation:
\begin{equation}
\left\{\begin{aligned}
\dfrac{d}{dt}\rho_{t}&=-||BN^{-1}B^{*}||\rho_{t}^{2}-2\gamma\rho_{t}-(M+b\theta_{h}),\\
\rho_{T}&=M_{T},
\end{aligned}\right.\label{eq:4-17}
\end{equation}
 and comparing (\ref{eq:4-60}) and (\ref{eq:4-17}), it is standard
to show the property (\ref{eq:4-16}). 

\subsection{CONTRACTION MAPPING }

We define the space of functions  $(x,t)\in\mathcal{H}\times(T-h,T)\mapsto\lambda(x,t)\in\mathcal{H}\times(T-h,T),$
equipped with the norm:
\begin{equation}
||\lambda||_{h}=\sup_{x\in\mathcal{H},t\in(T-h,T)}\dfrac{|\lambda(x,t)|}{|x|}.\label{eq:4-18}
\end{equation}
This space is a Banach space, denoted by $\mathcal{B}_{h\cdot}$. We
next consider the convex closed subset of $\mathcal{B}_{h\cdot}$ of functions
such that:
\begin{equation}
|\lambda(x,t)|\leq\mu_{t}|x|,\:||D_{x}\lambda(x,t)||\leq\rho_{t},\forall t\in(T-h,T],\label{eq:4-19}
\end{equation}
 where $\mu_{t}$ and $\rho_{t}$ are defined by (\ref{eq:4-8}) and
(\ref{eq:4-14}), respectively. The subset (\ref{eq:4-19}) is denoted
by $\mathcal{C}_{h}.$ The map $\lambda\mapsto\Lambda,$ defined
by (\ref{eq:4-2}) and (\ref{eq:4-3}), is defined from $\mathcal{C}_{h}$
to $\mathcal{C}_{h}.$ We want to show that it leads to a contraction. 

Let $\lambda^{1}(x,t),$ $\lambda^{2}(x,t)$ in $\mathcal{C}_{h}$
and the corresponding functions $\Lambda^{1}(x,t),$ $\Lambda^{2}(x,t)$, which also belong to $\mathcal{C}_{h}$. Let $y^{1}(s),y^{2}(s)$
be the solutions of (\ref{eq:4-2}) corresponding to $\lambda^{1},\lambda^{2}.$
We call $\widetilde{y}(s)=y^{1}(s)-y^{2}(s).$ We have:
\[
\left\{\begin{aligned}
&\dfrac{d}{ds}\widetilde{y}(s)=A(y^{1}(s))-A(y^{2}(s))-BN^{-1}B^{*}(\lambda^{1}(y^{1}(s))-\lambda^{2}(y^{2}(s))),\\
&\widetilde{y}(t)=0,
\end{aligned}\right.
\]
hence 
\[
\dfrac{d}{ds}|\widetilde{y}(s)|\leq\gamma|\widetilde{y}(s)|+||BN^{-1}B^{*}||\,|\lambda^{1}(y^{1}(s))-\lambda^{2}(y^{2}(s))|.
\]
Next,
\begin{align*}
|\lambda^{1}(y^{1}(s))-\lambda^{2}(y^{2}(s))|&\leq|\lambda^{1}(y^{1}(s))-\lambda^{1}(y^{2}(s))|+|\lambda^{1}(y^{2}(s))-\lambda^{2}(y^{2}(s))|\\
&\leq\rho_{s}|\widetilde{y}(s)||+||\lambda^{1}-\lambda^{2}||_{h}|x|\exp\left(\int_{t}^{s}(\gamma+||BN^{-1}B^{*}||\mu_{\tau})d\tau\right).
\end{align*}
 Therefore,
\[
\dfrac{d}{ds}|\widetilde{y}(s)|\leq(\gamma+||BN^{-1}B^{*}||\rho_{s})|\widetilde{y}(s)|+||BN^{-1}B^{*}||\,|x|\,||\lambda^{1}-\lambda^{2}||_{h}\exp\left(\int_{t}^{s}(\gamma+||BN^{-1}B^{*}||\mu_{\tau})d\tau\right).
\]
 We obtain that
\[
|\widetilde{y}(s)|\exp\left(-\int_{t}^{s}(\gamma+||BN^{-1}B^{*}||\rho_{\tau})d\tau\right)\leq||BN^{-1}B^{*}||\,|x|\,||\lambda^{1}-\lambda^{2}||_{h}\int_{t}^{s}\exp\left(-||BN^{-1}B^{*}||\int_{t}^{\tau}(\rho_{\theta}-\mu_{\theta})d\theta\right)d\tau,
\]
 which implies:
\begin{equation}
|\widetilde{y}(s)|\leq h||BN^{-1}B^{*}||\,|x|\,||\lambda^{1}-\lambda^{2}||_{h}\exp\left(\int_{t}^{s}(\gamma+||BN^{-1}B^{*}||\rho_{\tau})d\tau\right).\label{eq:4-20}
\end{equation}
We next have from the definition of the map $\Lambda(x,t)$ that:
\begin{equation}
\begin{aligned}
\Lambda^{1}(x,t)-\Lambda^{2}(x,t)&=DF_{T}(y^{1}(T))-DF_{T}(y^{2}(T))+\int_{t}^{T}\left(DA^{*}(y^{1}(s))\lambda^{1}(y^{1}(s))-DA^{*}(y^{2}(s))\lambda^{2}(y^{2}(s))\right)ds\\
&\quad+\int_{t}^{T}(DF(y^{1}(s))-DF(y^{2}(s)))ds.
\end{aligned}\label{eq:4-21}
\end{equation}
 We have:
\[
|DA^{*}(y^{1}(s))\lambda^{1}(y^{1}(s))-DA^{*}(y^{2}(s))\lambda^{2}(y^{2}(s))|\leq(b\theta_{h}+\gamma\rho_{s})|\widetilde{y}(s)|+\gamma|x|\,||\lambda^{1}-\lambda^{2}||_{h}\exp\left(\int_{t}^{s}(\gamma+||BN^{-1}B^{*}||\mu_{\tau})d\tau\right).
\]
 So, from (\ref{eq:4-21}), we obtain:
\begin{equation}
\begin{aligned}
|\Lambda^{1}(x,t)-\Lambda^{2}(x,t)|&\leq M_{T}|\widetilde{y}(T)|+\int_{t}^{T}(M+b\theta_{h}+\gamma\rho_{s})|\widetilde{y}(s)|ds\\
&\quad+\gamma|x|\,||\lambda^{1}-\lambda^{2}||_{h}\int_{t}^{T}\exp\left(\int_{t}^{s}(\gamma+||BN^{-1}B^{*}||\mu_{\tau})d\tau\right)ds,
\end{aligned}\label{eq:4-22}
\end{equation}
 and from (\ref{eq:4-20}):
\begin{align*}
|\Lambda^{1}(x,t)-\Lambda^{2}(x,t)|&\leq|x|\,|\lambda^{1}-\lambda^{2}||_{h}h\times\left[||BN^{-1}B^{*}||\left(M_{T}\exp\left(\int_{t}^{T}(\gamma+||BN^{-1}B^{*}||\rho_{\tau})d\tau\right)\right.\right.\\
&\quad\left.\left.+\int_{t}^{T}(M+b\theta_{h}+\gamma\rho_{s})\left(\int_{t}^{s}(\gamma+||BN^{-1}B^{*}||\rho_{\tau})d\tau\right)ds\right)\right]\\
&\quad+\gamma|x|\,|\lambda^{1}-\lambda^{2}||_{h}\int_{t}^{T}\exp\left(\int_{t}^{s}(\gamma+||BN^{-1}B^{*}||\mu_{\tau})d\tau\right)ds,
\end{align*}
then from the definition of $\rho_{t}$ (see (\ref{eq:4-13})), we
obtain:
\begin{equation}
|\Lambda^{1}(x,t)-\Lambda^{2}(x,t)|\leq|x|\,|\lambda^{1}-\lambda^{2}||_{h}h\,\left(\rho_{t}||BN^{-1}B^{*}||+\gamma\exp\left(\int_{T-h}^{T}(\gamma+||BN^{-1}B^{*}||\mu_{\tau})d\tau\right)\right).\label{eq:4-23}
\end{equation}
Similarly to the definition of $\theta_{h}$ (see (\ref{eq:4-80})), we define the quantity $\sigma_{h}$ by the formula:
\begin{equation}
\arctan\:\dfrac{\sigma_{h}||BN^{-1}B^{*}||+\gamma}{\sqrt{(M+b\theta_{h})||BN^{-1}B^{*}||-\gamma^{2}}}=\arctan\:\dfrac{M_{T}||BN^{-1}B^{*}||+\gamma}{\sqrt{(M+b\theta_{h})||BN^{-1}B^{*}||-\gamma^{2}}}+\left(\sqrt{(M+b\theta_{h})||BN^{-1}B^{*}||-\gamma^{2}}\right)h.\label{eq:4-24}
\end{equation}
 From (\ref{eq:4-14}), we see that $M_{T}<\rho_{t}<\sigma_{h}.$ Therefore
from (\ref{eq:4-23}),
\begin{equation}
||\Lambda^{1}-\Lambda^{2}||_{h}\leq|\lambda^{1}-\lambda^{2}||_{h}\,h\left(\sigma_{h}||BN^{-1}B^{*}||+\gamma\exp\left(h(\gamma+||BN^{-1}B^{*}||\theta_{h})\right)\right).\label{eq:4-240}
\end{equation}
Using the fact that $\theta_{h}\rightarrow M_{T}$ as $h\rightarrow0,$
equation (\ref{eq:4-24}) shows that $\sigma_{h}\rightarrow M_{T}$
as $h\rightarrow0.$ We deduce that:
\begin{equation}
h\left(\sigma_{h}||BN^{-1}B^{*}||+\gamma\exp\left(h(\gamma+||BN^{-1}B^{*}||\theta_{h})\right)\right)\rightarrow0,\text{ as }h\rightarrow0.\label{eq:4-25}
\end{equation}
We can restrict $h$ such that 
\begin{equation}
h\left(\sigma_{h}||BN^{-1}B^{*}||+\gamma\exp\left(h(\gamma+||BN^{-1}B^{*}||\theta_{h})\right)\right)<1,\label{eq:4-26}
\end{equation}
and thus for $h$ sufficiently small, the map $\lambda\mapsto\Lambda$
is paradoxical and leads to a contradiction. We can summarize the results in the following theorem:
\begin{thm}
\label{theo4-1} We assume $(\ref{eq:4-7}).$ We choose $h$ small
enough to satisfy conditions (\ref{eq:4-9}), (\ref{eq:4-15}), (\ref{eq:4-26}).
For $T-h<t<T$, there exists one and only one solution of the system
of forward-backward equations (\ref{eq:2-1}). We have also one and
only one solution of equation (\ref{eq:2-5}) on the same interval. 
\end{thm}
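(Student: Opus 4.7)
The plan is to show that everything assembled in Sections 5.1--5.3 is, in fact, a direct setup for the Banach fixed point theorem on the complete metric space $\mathcal{C}_h$. First I would note that $\mathcal{C}_h$, as a closed convex subset of the Banach space $\mathcal{B}_h$, is complete under the norm $||\cdot||_h$. The construction of $\mu_t$ via (\ref{eq:4-8}) and of $\rho_t$ via (\ref{eq:4-14}) was engineered precisely so that $\lambda \mapsto \Lambda$ stabilizes $\mathcal{C}_h$, while estimate (\ref{eq:4-240}) combined with the smallness condition (\ref{eq:4-26}) makes this map a strict contraction. Hence there is a unique fixed point $\Gamma \in \mathcal{C}_h$.

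The next step is to read off a solution of (\ref{eq:2-1}) from $\Gamma$. For any $(x,t)$ with $t \in (T-h, T)$, solve the forward equation (\ref{eq:4-2}) with $\lambda = \Gamma$ to obtain $y(\cdot)$ on $[t,T]$ (uniqueness for this ODE follows from (\ref{eq:4-1})), and define $z(s) := \Gamma(y(s), s)$. The fixed point identity $\Gamma(x,t) = \Lambda(x,t)$ and formula (\ref{eq:4-3}), applied along the flow and using the semigroup property of (\ref{eq:4-2}), give the integrated form of the adjoint equation; differentiating in $s$ recovers the backward equation in (\ref{eq:2-1}). Thus $(y, z)$ solves the FBODE and $u(\cdot)$ is determined by (\ref{eq:2-2}).

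For uniqueness of the FBODE, suppose $(\tilde y, \tilde z)$ is another solution on $(T-h, T)$. Varying the initial data defines $\tilde\Gamma(x, t) := \tilde z(t)$, and the \emph{a priori} estimates of Section 4 (which hold provided $h$ is within the regime imposed by (\ref{eq:4-9}) and (\ref{eq:4-15}), so that $\alpha_t, \beta_t$ remain controlled by $\mu_t, \rho_t$) show that $\tilde\Gamma \in \mathcal{C}_h$. A direct computation using (\ref{eq:2-1}) shows that $\tilde\Gamma$ is a fixed point of $\lambda \mapsto \Lambda$, so $\tilde\Gamma = \Gamma$ by the contraction principle, which then forces $\tilde y = y$ via uniqueness for (\ref{eq:4-2}), and $\tilde z = z$. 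Finally, for the PDE (\ref{eq:2-5}), the fixed point relation $\Gamma(x,t) = \Lambda(x,t)$ is its characteristic/mild form; since $\Gamma \in \mathcal{C}_h$ is $C^1$ in $x$, differentiating the relation in $t$ along the characteristic $y(\cdot)$ yields the classical form of (\ref{eq:2-5}), and uniqueness within $\mathcal{C}_h$ is again the uniqueness of the fixed point.

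The main obstacle I anticipate is the bookkeeping in the uniqueness part: one must verify that an arbitrary solution of (\ref{eq:2-1}) automatically lies in the right class, i.e., that the decoupling map it induces really belongs to $\mathcal{C}_h$ rather than some larger class. This is where the \emph{a priori} bounds (\ref{eq:3-5}) from Section 4 enter decisively, and one should check that the thresholds $\mu_t, \rho_t$ (which are constructed from Riccati equations) dominate $\min(\alpha_t, \beta_t)$ and $\beta_t$ on $(T-h, T]$ for $h$ small — a comparison of Riccati ODEs analogous to the one used for (\ref{eq:4-16}). Everything else is routine, given the contraction estimate already derived.
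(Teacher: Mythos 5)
Your proposal follows essentially the same route as the paper: the Banach fixed point theorem applied to $\lambda\mapsto\Lambda$ on the closed set $\mathcal{C}_h$, with invariance guaranteed by the Riccati constructions of $\mu_t$ and $\rho_t$ and contraction guaranteed by (\ref{eq:4-240}) together with (\ref{eq:4-26}), after which the solution of (\ref{eq:2-1}) is read off from the decoupling function. Your treatment of uniqueness (checking that an arbitrary solution of (\ref{eq:2-1}) induces a decoupling field lying in $\mathcal{C}_h$, via the \emph{a priori} estimates and a Riccati comparison) is in fact more explicit than the paper, which states the theorem as a summary of Sections 5.1--5.3 without spelling this step out.
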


\section{GLOBAL SOLUTION }

\subsection{STATEMENT OF RESULTS}

We have proven in Theorem \ref{theo4-1} the existence and uniqueness
of a local solution of the system (\ref{eq:2-1}). We want to state
that this solution is global, under the assumptions of Proposition
\ref{prop3-2}.
\begin{thm}
\label{theo5-1}We make the assumptions of Proposition \ref{prop3-2}
and (\ref{eq:4-7}). The local solution defined in Theorem \ref{theo4-1}
can be extented. Thus there exists one and only one solution of the
system (\ref{eq:2-1}) on any finite interval $[0,T]$, and there exists
one and only one solution of equation (\ref{eq:2-5}) on any finite
interval $[0,T].$
\end{thm}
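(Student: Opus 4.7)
The approach is a standard continuation argument à la Delarue~\cite{Delaure2002}: iterate the local existence of Theorem~\ref{theo4-1} backwards from $T$, using the a priori bound of Proposition~\ref{prop3-2} to ensure that the Lipschitz constant of the emerging decoupling function stays below a global constant, so that a uniform local step can be used all the way down to $t=0$.

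The first step is to extract this uniform constant. Since $\alpha_s$ is bounded on $[0,T]$ and (\ref{eq:2-14}) yields $\nu-b\alpha_s>0$ for every $s\in[0,T]$, the number
$$K:=\dfrac{M_T^2}{\nu_T}+\dfrac{\gamma^2}{m}T+\int_0^T\dfrac{(M+b\alpha_s)^2}{\nu-b\alpha_s}\,ds$$
is finite. The a priori bounds (\ref{eq:3-5}) then say that whenever a classical solution of (\ref{eq:2-1}) is defined on some subinterval $[\tau,T]$, the associated decoupling function $\Gamma(\cdot,\tau)$ satisfies $|\Gamma(x,\tau)|\leq K|x|$ and $\|D_x\Gamma(\cdot,\tau)\|\leq K$, with $K$ independent of $\tau$.

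The second step is to promote Theorem~\ref{theo4-1} to a uniform local step for any sufficiently small terminal datum. The plan is to rerun the fixed-point argument of Sections~4.1--4.3 with the terminal map $D_xF_T$ replaced by an arbitrary $C^1$ map $G\colon\mathcal{H}\to\mathcal{H}$ satisfying $G(0)=0$, $|G(x)|\leq K|x|$ and $\|D_xG(x)\|\leq K$. The constant $M_T$ enters the construction only as the initial value of the Riccati ODEs (\ref{eq:4-60}) and (\ref{eq:4-17}) for $\mu_t,\rho_t$, and through the smallness conditions (\ref{eq:4-9}), (\ref{eq:4-15}), (\ref{eq:4-26}). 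Replacing $M_T$ by $K$ throughout yields a single smallness constraint on $h$ depending only on $\gamma,M,b,K,\|BN^{-1}B^*\|$; choose any $h^*>0$ satisfying it. The contraction estimate (\ref{eq:4-240}) is unaffected, so local existence and uniqueness hold on every interval of length $h^*$ for every admissible terminal datum $G$.

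Finally, the iteration proceeds as follows: Theorem~\ref{theo4-1} (applied with terminal datum $D_xF_T$, for which $M_T\leq K$) produces a solution on $[T-h^*,T]$. At $\tau_1:=T-h^*$ the first step shows that $\Gamma(\cdot,\tau_1)$ is an admissible terminal datum, so the second step yields a solution on $[\tau_1-h^*,\tau_1]$; the two pieces glue at $\tau_1$ via $z(\tau_1)=\Gamma(y(\tau_1),\tau_1)$. After at most $\lceil T/h^*\rceil$ iterations, the interval $[0,T]$ is covered. Uniqueness on $[0,T]$ is inherited from the uniqueness on each subinterval, and solvability of (\ref{eq:2-5}) follows because $\Gamma$ is the decoupling map of the global FBODE solution. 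The main obstacle is the inspection performed in the second step: one must verify that nothing in the construction of Theorem~\ref{theo4-1} depends on $D_xF_T$ beyond the scalar $M_T$, so that the a priori control of Proposition~\ref{prop3-2} can genuinely serve as a replacement terminal bound at every restart.
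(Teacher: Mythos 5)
Your proposal is correct and follows essentially the same continuation argument as the paper: the a priori estimates of Propositions \ref{prop3-1} and \ref{prop3-2} give a bound ($\beta_0$ in the paper, your $K$) on the restarted terminal datum $U_{T-h}=\Gamma(\cdot,T-h)$ that is independent of the restart time, and since the local fixed-point construction of Theorem \ref{theo4-1} depends on the terminal datum only through such a bound, the step length can be taken uniform and finitely many iterations reach $t=0$. Your explicit remark that one must verify $D_xF_T$ enters the local argument only through the scalar $M_T$ is precisely the point the paper flags with ``the difference is due to the fact that $M_{T-h}\neq M_T$.''
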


\begin{proof}
Defining by $\Gamma(x,t)$ the fixed point obtained in Theorem \ref{theo4-1},
it is the unique solution of the paraboloic equation: 
\begin{equation}
\left\{\begin{aligned}
-\dfrac{\partial\Gamma}{\partial t}&=D_{x}\Gamma(x)A(x)+(D_{x}A(x))^{*}\Gamma(x)-D_{x}\Gamma(x)BN^{-1}B^{*}\Gamma(x,s)+D_{x}F(x),\:T-h<t<T,\\
\Gamma(x,T)&=D_{x}F_{T}(x),
\end{aligned}\right.\label{eq:5-1}
\end{equation}
with $h$ restricted as stated in Theorem \ref{theo4-1}. We also
have the estimates: 
\begin{equation}
\left\{\begin{aligned}
|\Gamma(x,t)|&\leq\min(\alpha_{t},\beta_{t})|x|,\\
||D_{x}\Gamma(x,t)||&\leq\beta_{t},
\end{aligned}\right.\label{eq:5-2}
\end{equation}
with 
\begin{equation}
\left\{\begin{aligned}
\alpha_{t}&=\dfrac{M_{T}^{2}}{\nu_{T}}+\dfrac{\gamma^{2}+M^{2}}{k}(T-t),\\
\beta_{t}&=\dfrac{M_{T}^{2}}{\nu_{T}}+\dfrac{\gamma^{2}}{m}(T-t)+\int_{t}^{T}\dfrac{(M+b\alpha_{s})^{2}}{\nu-b\alpha_{s}}ds.
\end{aligned}\right.\label{eq:5-3}
\end{equation}
These estimates follow from the \textit{a priori} estimates stated in Proposition
\ref{prop3-1} and \ref{prop3-2}. They do not depend on $h.$ Now we
want to extend (\ref{eq:5-1}) for $t<T-h$. To avoid confusion,
we define 
\begin{equation}
U_{T-h}(x):=\Gamma(x,T-h).\label{eq:5-4}
\end{equation}
We set $M_{T-h}=\beta_{0}.$ We can then state: 
\begin{equation}
\left\{\begin{aligned}
|U_{T-h}(x)|&\leq M_{T-h}|x|,\\
||D_{x}U_{T-h}(x)||&\leq M_{T-h},
\end{aligned}\right.\label{eq:5-5}
\end{equation}
and we consider the parabolic equation:
\begin{equation}
\left\{\begin{aligned}
-\dfrac{\partial\Gamma}{\partial t}&=D_{x}\Gamma(x)A(x)+(D_{x}A(x))^{*}\Gamma(x)-D_{x}\Gamma(x)BN^{-1}B^{*}\Gamma(x,s)+D_{x}F(x),\:t<T-h,\\
\Gamma(x,T-h)&=U_{T-h}(x).
\end{aligned}\right.\label{eq:5-6}
\end{equation}
 We associate to this equation the system: 
\begin{equation}
\left\{\begin{aligned}
\dfrac{dy}{ds}&=A(y)-BN^{-1}B^{*}z(s),\:t<s<T-h,\\
-\dfrac{dz}{ds}&=(DA(y(s)))^{*}z(s)+DF(y(s)),\\
y(t)&=x,\:z(T-h)=U_{T-h}(y(T-h)).
\end{aligned}\right.\label{eq:5-7}
\end{equation}
Proceeding like in Theorem \ref{theo4-1}, we can solve this system
on an interval $[T-h-l,T-h],$ for a sufficiently small $l\not=h.$
The difference is due to the fact that $M_{T-h}\not=M_{T}.$ So in
(\ref{eq:5-1}), we can replace $T-h$ by $T-h-l.$ This time the estimates
on $\Gamma(x,T-h-l)$ and $D_{x}\Gamma(x,T-h-l)$ are identical to
those of $\Gamma(x,T-h)$ and $D_{x}\Gamma(x,T-h)$, thanks to the
\textit{a priori} estimates. So the intervals we can extend further will have
the same length. Clearly, this implies that we can extend (\ref{eq:5-1})
up to $t=0.$ So, we obtain the global existence and uniqueness of
equation (\ref{eq:2-5}) on $[0,T].$ The proof is complete.
\end{proof}

\subsection{OPTIMAL CONTROL}

In Theorem \ref{theo5-1}, we have obtained the existence and uniqueness
of the solution of the pair $(y(s),z(s))$ of the system (\ref{eq:2-1}),
for any $t\in[0,T].$ We want now to check that the control $u(s)$
defined by (\ref{eq:2-2}) is solution of the control problem (\ref{eq:1-9}),
(\ref{eq:1-10}), and that the optimal control is unique.
\begin{thm}
\label{theo5-2} Under the assumptions of Theorem \ref{theo5-1},
the control $u(\cdot)$ defined by (\ref{eq:2-2}) is the unique optimal
control for the problem (\ref{eq:1-9}), (\ref{eq:1-10}). 
\end{thm}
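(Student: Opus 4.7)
The plan is to verify sufficiency of the Pontryagin Maximum Principle directly. Let $v(\cdot)\in L^{2}(t,T;\mathcal{V})$ be any admissible control with corresponding trajectory $x_{v}(\cdot)$, and let $(y(\cdot),z(\cdot))$ be the solution of (\ref{eq:2-1}) from Theorem \ref{theo5-1}, so that $u(s)=-N^{-1}B^{*}z(s)$ is our candidate optimizer. Set $\widetilde{x}(s):=x_{v}(s)-y(s)$, which satisfies $\widetilde{x}(t)=0$ and $\frac{d\widetilde{x}}{ds}=A(x_{v})-A(y)+B(v-u)$. The goal will be to show $J_{xt}(v)\geq J_{xt}(u)$ with strict inequality unless $v\equiv u$.

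First I would apply the strong convexity of $F,F_{T}$ from (\ref{eq:1-7})-(\ref{eq:170}) together with the algebraic identity $\tfrac{1}{2}(v,Nv)-\tfrac{1}{2}(u,Nu)=(Nu,v-u)+\tfrac{1}{2}(v-u,N(v-u))$ (valid by self-adjointness of $N$, with the positivity of $N$ implicit in the well-posedness of the cost and compatible with (\ref{eq:1-8})) to obtain
\[
J_{xt}(v)-J_{xt}(u)\geq\mathcal{L}+\frac{\nu_{T}}{2}|\widetilde{x}(T)|^{2}+\frac{\nu}{2}\int_{t}^{T}|\widetilde{x}|^{2}ds+\frac{1}{2}\int_{t}^{T}(v-u,N(v-u))ds,
\]
where $\mathcal{L}:=\int_{t}^{T}(D_{x}F(y),\widetilde{x})ds+(D_{x}F_{T}(y(T)),\widetilde{x}(T))+\int_{t}^{T}(Nu,v-u)ds$ collects the first-order Pontryagin terms.

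The key step is then to differentiate $(\widetilde{x}(s),z(s))$ in $s$, substitute the backward equation in (\ref{eq:2-1}), and integrate from $t$ to $T$ using $\widetilde{x}(t)=0$ and $z(T)=D_{x}F_{T}(y(T))$. Since $u=-N^{-1}B^{*}z$, the cross-term $(B(v-u),z)=-(v-u,Nu)$ that appears exactly cancels the $(Nu,v-u)$ contribution of $\mathcal{L}$, and the remaining linear pieces of $\mathcal{L}$ collapse into a single ``curvature'' remainder,
\[
\mathcal{L}=\int_{t}^{T}\bigl(A(x_{v}(s))-A(y(s))-DA(y(s))\widetilde{x}(s),\,z(s)\bigr)ds.
\]

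Controlling this remainder will be the main obstacle, since $A$ is not assumed convex and the integrand has no a priori sign. Here the hypothesis (\ref{eq:1-5}) does the decisive work: a Taylor-type computation gives $|A(x_{v})-A(y)-DA(y)\widetilde{x}|\leq\frac{b}{2(1+|y|)}|\widetilde{x}|^{2}$, and the \emph{a priori} bound $|z(s)|=|\Gamma(y(s),s)|\leq\alpha_{s}|y(s)|$ from Proposition \ref{prop3-1} supplies the matching factor --- the quotient $|y|/(1+|y|)\leq 1$ absorbs the $|y|$-growth of $z$ exactly, delivering $|\mathcal{L}|\leq\frac{1}{2}\int_{t}^{T}b\alpha_{s}|\widetilde{x}|^{2}ds$. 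Substituting back and using $\alpha_{s}\leq\alpha_{0}$, assumption (\ref{eq:2-14}) yields
\[
J_{xt}(v)-J_{xt}(u)\geq\frac{\nu_{T}}{2}|\widetilde{x}(T)|^{2}+\frac{1}{2}\int_{t}^{T}(\nu-b\alpha_{s})|\widetilde{x}|^{2}ds+\frac{1}{2}\int_{t}^{T}(v-u,N(v-u))ds\geq 0,
\]
which proves optimality. Uniqueness follows immediately: equality throughout forces $\widetilde{x}\equiv 0$ from the strictly positive $|\widetilde{x}|^{2}$-integrand, and then $v=u$ almost everywhere on $[t,T]$ by the strict positivity of $N$.
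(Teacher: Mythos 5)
Your proposal is correct and follows essentially the same route as the paper's proof: a second-order Taylor expansion of $F,F_T$ combined with the quadratic identity for the control cost, integration by parts against the adjoint $z$ to cancel the first-order terms, and the bound $\bigl|\bigl(z,A(x_{v})-A(y)-DA(y)\widetilde{x}\bigr)\bigr|\leq\frac{b\alpha_{s}}{2}|\widetilde{x}|^{2}$ obtained from (\ref{eq:1-5}) and $|z(s)|\leq\alpha_{s}|y(s)|$, concluding via (\ref{eq:2-14}). The only difference is notational (you compare $v$ directly to $u$ where the paper writes the competitor as $u+v$), and your explicit remark on uniqueness via positivity of $N$ is a slight sharpening of what the paper leaves implicit.
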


\begin{proof}
Let $v(\cdot)$ be another control. We shall prove that 
\begin{equation}
J(u(\cdot)+v(\cdot))\geq J(u(\cdot)),\label{eq:5-8}
\end{equation}
which will prove the optimality of $u(\cdot).$ We define by $y_{v}(\cdot)$
the state corresponding to the control $u(\cdot)+v(\cdot).$ It is the solution
of 
\begin{equation}
\left\{\begin{aligned}
\dfrac{d}{ds}y_{v}(s)&=A(y_{v}(s))+B(u(s)+v(s)),\\
y_{v}(t)&=x,
\end{aligned}\right.\label{eq:5-9}
\end{equation}
and we have:
\[
J(u(\cdot)+v(\cdot))=\int_{t}^{T}F(y_{v}(s))ds+F_{T}(y_{v}(T))+\dfrac{1}{2}\int_{t}^{T}(u(s)+v(s),N(u(s)+v(s)))ds,
\]
 and 
\begin{align*}
&\ J(u(\cdot)+v(\cdot))-J(u(\cdot))\\
=&\ \int_{t}^{T}(F(y_{v}(s))-F(y(s)))ds+F_{T}(y_{v}(T))-F_{T}(y(T))+\dfrac{1}{2}\int_{t}^{T}(v(s),Nv(s))ds+\int_{t}^{T}(Nu(s),v(s))ds.
\end{align*}
 We denote $\tilde{y}_{v}(s):=y_{v}(s)-y(s).$ It satisfies:
\begin{equation}
\left\{\begin{aligned}
\dfrac{d}{ds}\tilde{y}_{v}(s)&=A(y_{v}(s))-A(y(s))+Bv(s),\\
\tilde{y}_{v}(t)&=0.
\end{aligned}\right.\label{eq:5-10}
\end{equation}
 Then,
\begin{align*}
J(u(\cdot)+v(\cdot))-J(u(\cdot))&=\int_{t}^{T}(D_{x}F(y(s)),\tilde{y}_{v}(s))ds+\int_{t}^{T}\int_{0}^{1}\int_{0}^{1}\theta\left(D_{xx}^{2}F(y(s)+\lambda\theta\tilde{y}_{v}(s))\tilde{y}_{v}(s),\tilde{y}_{v}(s)\right)dsd\lambda d\theta\\
&\quad+(D_{x}F_{T}(y(T)),\tilde{y}_{v}(T))+\int_{0}^{1}\int_{0}^{1}\theta\left(D_{xx}^{2}F_{T}(y(T)+\lambda\theta\tilde{y}_{v}(T))\tilde{y}_{v}(T),\tilde{y}_{v}(T)\right)d\lambda d\theta\\
&\quad+\dfrac{1}{2}\int_{t}^{T}(v(s),Nv(s))ds-\int_{t}^{T}(z(s),Bv(s))ds.
\end{align*}
From the assumptions (\ref{eq:1-7}), we can write:
\begin{align*}
J(u(\cdot)+v(\cdot))-J(u(\cdot))&\geq\int_{t}^{T}\left(-\dfrac{d}{ds}z(s)-DA^{*}(y(s))z(s),\tilde{y}_{v}(s)\right)ds+\dfrac{\nu}{2}\int_{t}^{T}|\tilde{y}_{v}(s)|^{2}ds+(z(T),\tilde{y}_{v}(T))\\
&\quad+\dfrac{\nu_{T}}{2}|\tilde{y}_{v}(T)|^{2}+\dfrac{1}{2}\int_{t}^{T}(v(s),Nv(s))ds-\int_{t}^{T}(z(s),\dfrac{d}{ds}\tilde{y}_{v}(s)-(A(y_{v}(s))-A(y(s))))ds,
\end{align*}
 which reduces to:
\begin{equation}
\begin{aligned}
J(u(\cdot)+v(\cdot))-J(u(\cdot))&\geq\dfrac{\nu}{2}\int_{t}^{T}|\tilde{y}_{v}(s)|^{2}ds+\dfrac{\nu_{T}}{2}|\tilde{y}_{v}(T)|^{2}+\dfrac{1}{2}\int_{t}^{T}(v(s),Nv(s))ds\\
&\quad+\int_{t}^{T}\left(z(s),A(y_{v}(s))-A(y(s))-DA(y(s))\tilde{y}_{v}(s)\right)ds.
\end{aligned}\label{eq:5-11}
\end{equation}
Note that
\[
\left|\left(z(s),A(y_{v}(s))-A(y(s))-DA(y(s))\tilde{y}_{v}(s)\right)\right|\leq\dfrac{b|z(s)||\tilde{y}_{v}(s)|^{2}}{2(1+|y(s)|)}\leq\dfrac{b\alpha_s}{2}|\tilde{y}_{v}(s)|^{2}.
\]
 Finally, we can state that
\begin{equation}
J(u(\cdot)+v(\cdot))-J(u(\cdot))\geq\dfrac{1}{2}\int_{t}^{T}(\nu-b\alpha_s)|\tilde{y}_{v}(s)|^{2}ds+\dfrac{\nu_{T}}{2}|\tilde{y}_{v}(T)|^{2}\dfrac{1}{2}\int_{t}^{T}(v(s),Nv(s))ds.\label{eq:5-12}
\end{equation}
Thanks to the assumption (\ref{eq:2-14}), the right hand side of (\ref{eq:5-12})
is positive, which proves (\ref{eq:5-8}) and completes the proof
of the result.  
\end{proof}

\subsection{BELLMAN EQUATION }

We have proven, under the assumptions of Theorem \ref{theo5-1}, that
the control problem (\ref{eq:1-9}), (\ref{eq:1-10}) has a unique
solution $u(\cdot).$ Defining the value function 
\begin{equation}
V(x,t):=\inf_{v(\cdot)}J_{xt}(v(\cdot))=J_{xt}(u(\cdot)),\label{eq:5-13}
\end{equation}
we can state that:
\begin{equation}
V(x,t)=\int_{t}^{T}F(y(s))ds+F_{T}(y(T))+\dfrac{1}{2}\int_{t}^{T}(BN^{-1}B^{*}\Gamma(y(s),s),\Gamma(y(s),s))ds,\label{eq:5-14}
\end{equation}
with
\begin{equation}
\left\{\begin{aligned}
\dfrac{d}{ds}y(s)&=A(y(s))-BN^{-1}B^{*}\Gamma(y(s),s),\\
y(t)&=x.
\end{aligned}\right.\label{eq:5-15}
\end{equation}
 We first have: 
\begin{prop}
\label{prop5-1}We have the following property: 
\begin{equation}
\Gamma(x,t)=D_{x}V(x,t).\label{eq:5-16}
\end{equation}
\end{prop}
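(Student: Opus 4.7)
The plan is to differentiate the representation (\ref{eq:5-14}) in $x$ and show that the resulting expression coincides, after an integration by parts along $s$, with $\Gamma(x,t)$; the bridge will be the backward equation satisfied by $s\mapsto\Gamma(y(s),s)$.

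First I would set $\mathcal{Y}(s):=D_{x}y(s)$, where $y(\cdot)$ is the solution of (\ref{eq:5-15}). Differentiating (\ref{eq:5-15}) in $x$ yields the linear ODE
\begin{equation*}
\dfrac{d}{ds}\mathcal{Y}(s)=\bigl(DA(y(s))-BN^{-1}B^{*}D_{x}\Gamma(y(s),s)\bigr)\mathcal{Y}(s),\qquad \mathcal{Y}(t)=I.
\end{equation*}
This makes sense because Theorem \ref{theo5-1} provides $\Gamma$ globally with the bounds (\ref{eq:5-2}), so the coefficient in the above equation is bounded, and classical arguments give $C^{1}$ dependence of $y(s)$ on $x$.

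Next, I would differentiate (\ref{eq:5-14}) in $x$ by the chain rule. Testing against an arbitrary $\xi\in\mathcal{H}$,
\begin{align*}
(D_{x}V(x,t),\xi)&=\int_{t}^{T}(D_{x}F(y(s)),\mathcal{Y}(s)\xi)\,ds+(D_{x}F_{T}(y(T)),\mathcal{Y}(T)\xi)\\
&\quad+\int_{t}^{T}\!(BN^{-1}B^{*}\Gamma(y(s),s),D_{x}\Gamma(y(s),s)\mathcal{Y}(s)\xi)\,ds.
\end{align*}
Now set $z(s):=\Gamma(y(s),s)$, which by the decoupling discussion in Section 3.2 satisfies the backward equation in (\ref{eq:2-1}) with $z(T)=D_{x}F_{T}(y(T))$. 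I would compute $\tfrac{d}{ds}(z(s),\mathcal{Y}(s)\xi)$: the terms involving $DA(y(s))$ cancel thanks to the adjoint convention (\ref{eq:1-3}), leaving
\begin{equation*}
\dfrac{d}{ds}(z(s),\mathcal{Y}(s)\xi)=-(DF(y(s)),\mathcal{Y}(s)\xi)-(z(s),BN^{-1}B^{*}D_{x}\Gamma(y(s),s)\mathcal{Y}(s)\xi).
\end{equation*}
Integrating from $t$ to $T$, using the terminal condition $z(T)=D_{x}F_{T}(y(T))$ and $z(t)=\Gamma(x,t)$, and comparing with the formula for $(D_{x}V(x,t),\xi)$ above, I obtain $(\Gamma(x,t),\xi)=(D_{x}V(x,t),\xi)$ for every $\xi$, giving (\ref{eq:5-16}).

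The only delicate point is justifying the differentiation of $V$ under the integral sign and the existence of $\mathcal{Y}(s)=D_{x}y(s)$; this is the main (mild) obstacle, but it follows from the global estimates (\ref{eq:5-2})--(\ref{eq:5-3}) on $\Gamma$ and $D_{x}\Gamma$, together with the $C^{1}$ regularity of $A$, $F$, $F_{T}$, by a standard Gronwall argument. Once that is in place, the identification of $D_{x}V$ with $\Gamma$ is purely algebraic, coming from the duality between the forward variational equation for $\mathcal{Y}$ and the backward adjoint equation for $z$.
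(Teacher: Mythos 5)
Your argument is correct, and it reaches the identity by a slightly different route than the paper. The paper invokes the envelope theorem: because the infimum defining $V(x,t)$ is attained at the unique open-loop control $u(\cdot)$, only the explicit $x$-dependence needs to be differentiated, so the variational flow is the \emph{open-loop} one, $\tfrac{d}{ds}\mathcal{X}(s)=D_{x}A(y(s))\mathcal{X}(s)$, $\mathcal{X}(t)=I$, and the quadratic control cost contributes no derivative term; the conclusion then follows from a single integration by parts against the adjoint $z(s)$. You instead differentiate the \emph{closed-loop} representation (\ref{eq:5-14}) directly, which forces you to carry both the feedback term $-BN^{-1}B^{*}D_{x}\Gamma(y(s),s)$ in the variational equation for $\mathcal{Y}(s)$ and the extra term $\int_{t}^{T}(BN^{-1}B^{*}\Gamma(y(s),s),D_{x}\Gamma(y(s),s)\mathcal{Y}(s)\xi)\,ds$ coming from the running control cost; these two extra contributions cancel exactly in your computation of $\tfrac{d}{ds}(z(s),\mathcal{Y}(s)\xi)$ (using the self-adjointness of $BN^{-1}B^{*}$ and $z(s)=\Gamma(y(s),s)$), and the remaining duality identity is the same as the paper's. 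The trade-off: your version needs the global Lipschitz bound $\|D_{x}\Gamma\|\leq\beta_{t}$ from (\ref{eq:5-2}) to make sense of the closed-loop variational equation and of the differentiation under the integral sign — which is available from Theorem \ref{theo5-1}, as you note — whereas the paper's version needs to justify the envelope theorem in a Hilbert-space setting but works with a simpler variational flow. Both are legitimate; yours is self-contained modulo the regularity of $\Gamma$ that the paper has already established.
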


\begin{proof}
Since the minimum of $J_{xt}(v(\cdot))$ is attained in the unique value
$u(\cdot),$ we can rely on the envelope theorem to claim that:
\begin{equation}
(D_{x}V(x,t),\xi)=\int_{t}^{T}(D_{x}F(y(s)),\mathcal{X}(s)\xi)ds+(D_{x}F_{T}(y(T)),\mathcal{X}(T)\xi),\label{eq:5-17}
\end{equation}
 in which $\mathcal{X}(s)$ is the solution of 
\[
\left\{\begin{aligned}
\dfrac{d}{ds}\mathcal{X}(s)&=D_{x}A(y(s))\mathcal{X}(s),\\
\mathcal{X}(t)&=I.
\end{aligned}\right.
\]
Recalling the equation (\ref{eq:2-1}) for $z(s)$ and performing
integration by parts in (\ref{eq:5-17}), the result $(D_{x}V(x,t),\xi)=(\Gamma(x,t),\xi)$
is easily obtained. This proves the result (\ref{eq:5-16}). 
\end{proof}
We can then obtain the Bellman equation for the value function $V(x,t)$.
\begin{thm}
\label{theo5-3}We make the assumptions of Theorem \ref{theo5-1}.
The function $V(x,t)$ is the unique solution of 
\begin{equation}
\left\{\begin{aligned}
&-\dfrac{\partial V}{\partial t}-(D_{x}V,A(x))+\dfrac{1}{2}(D_{x}V,BN^{-1}B^{*}D_{x}V)=F(x),\\
&V(x,T)=F_{T}(x).
\end{aligned}\right.\label{eq:5-18}
\end{equation} 
\end{thm}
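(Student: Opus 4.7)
My plan is to split the argument into existence (verifying that $V$ defined by (\ref{eq:5-14}) satisfies (\ref{eq:5-18})) and uniqueness. For existence, I would exploit Proposition \ref{prop5-1} together with the dynamic programming principle. Since on $[t,t+h]$ the restriction of the optimal pair $(y(\cdot),u(\cdot))$ produced in Theorem \ref{theo5-2} is itself optimal for the subproblem with terminal time $t+h$ and terminal payoff $V(\cdot,t+h)$, one has
\begin{equation*}
V(x,t) = \int_t^{t+h}\left[F(y(s)) + \tfrac{1}{2}(u(s),Nu(s))\right]ds + V(y(t+h),t+h).
\end{equation*}
Differentiating in $h$ at $h=0$ and using (\ref{eq:2-2}) together with Proposition \ref{prop5-1} to substitute $u(t) = -N^{-1}B^{*}D_xV(x,t)$, I obtain
\begin{equation*}
0 = F(x) + \tfrac{1}{2}(u(t),Nu(t)) + \partial_t V(x,t) + (D_xV(x,t),\, A(x)+Bu(t)),
\end{equation*}
which after substitution of $u(t)$ simplifies exactly to (\ref{eq:5-18}). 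The terminal condition $V(x,T)=F_T(x)$ is read off directly from (\ref{eq:5-14}). As a consistency check I would alternatively differentiate (\ref{eq:5-14}) in $t$ using the flow property of $y(s;t,x)$ and the PDE (\ref{eq:5-1}) for $\Gamma=D_xV$, which must yield the same equation.

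For uniqueness, I would run the standard verification argument. Let $W$ be any other $C^{1,1}$ solution of (\ref{eq:5-18}) satisfying the linear growth that makes all the integrals below finite. For an arbitrary admissible $v(\cdot)\in L^2(t,T;\mathcal{V})$ with trajectory $x_v(\cdot)$, applying the chain rule to $s\mapsto W(x_v(s),s)$ and substituting $\partial_s W$ from (\ref{eq:5-18}) yields, after integrating from $t$ to $T$ and completing the square,
\begin{equation*}
J_{xt}(v(\cdot)) - W(x,t) = \tfrac{1}{2}\int_t^T\bigl(v(s)+N^{-1}B^{*}D_xW,\; N(v(s)+N^{-1}B^{*}D_xW)\bigr)ds \;\geq\; 0,
\end{equation*}
with equality if and only if $v(s)=-N^{-1}B^{*}D_xW(x_v(s),s)$. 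Hence $W(x,t)=\inf_{v}J_{xt}(v)=V(x,t)$.

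The main obstacle I anticipate is justifying the differentiability and chain-rule operations in the infinite-dimensional setting: specifically, the joint $C^1$ regularity of $V$ in $(x,t)$ needed for the DPP differentiation above, and the validity of the chain rule for $W(x_v(s),s)$ along an absolutely continuous $\mathcal{H}$-valued trajectory. Both should follow from the a priori estimates of Propositions \ref{prop3-1} and \ref{prop3-2}, the boundedness of $\beta_t$ on $[0,T]$ supplied by Theorem \ref{theo5-1}, together with the fact that $y_v(\cdot)\in H^1(t,T;\mathcal{H})$. The cleanest rigorous route, rather than a pointwise chain rule, is probably to argue via the integrated form of the state equation and the fundamental theorem of calculus, keeping all smoothness requirements in line with what is actually available from the a priori estimates.
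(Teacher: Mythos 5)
Your proposal is correct, and its existence half is essentially the paper's own argument: the identity you write over $[t,t+h]$ is exactly the paper's (\ref{eq:5-19}) (with $\tfrac12(u,Nu)=\tfrac12(BN^{-1}B^{*}\Gamma,\Gamma)$), and the differentiation at $h=0$ is carried out there by precisely the route you flag as the main obstacle: the increment $V(y(\epsilon),t+\epsilon)-V(x,t+\epsilon)$ is expanded through the G\^ateaux derivative $\Gamma=D_xV$ (Proposition \ref{prop5-1}) and the remainder is killed using the uniform Lipschitz bound $\|D_x\Gamma\|\le\beta_t$ and continuity in $t$, i.e.\ exactly the \emph{a priori} estimates of Propositions \ref{prop3-1}--\ref{prop3-2}. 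One small point: the paper never invokes the dynamic programming principle as such; (\ref{eq:5-19}) is read off directly from the representation (\ref{eq:5-14}) and the flow property of (\ref{eq:5-15}), which spares it from having to justify the DPP — your own ``consistency check'' is in fact the paper's primary route. Where you genuinely diverge is uniqueness. You run the classical verification argument (chain rule along an arbitrary admissible trajectory, substitute $\partial_sW$ from (\ref{eq:5-18}), complete the square in $v$), which is self-contained, gives uniqueness in the class of $C^{1,1}$ solutions with suitable growth, and incidentally re-proves the optimality statement of Theorem \ref{theo5-2}; its cost is justifying the chain rule along $H^{1}(t,T;\mathcal{H})$-valued trajectories and, for the equality case $W\ge V$, the solvability of the closed-loop equation driven by $-N^{-1}B^{*}D_xW$ (available since $D_xW$ is Lipschitz). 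The paper instead takes the $x$-gradient of (\ref{eq:5-18}), recognizes equation (\ref{eq:2-5}) for $D_xV$, and appeals to the global uniqueness of (\ref{eq:2-5}) from Theorem \ref{theo5-1}, recovering $V$ from $D_xV$, the equation and the terminal data; this is shorter but implicitly requires the candidate solution to be twice differentiable in $x$ and to lie in the class where (\ref{eq:2-5}) is unique. Both arguments are legitimate; yours trades a stronger regularity hypothesis in $x$ for a pointwise chain rule in $s$.
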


\begin{proof}
We know that $V(x,t)$ is G\^{a}teaux differentiable in $x,$ with the derivative
$\Gamma(x,t).$ From (\ref{eq:2-1}), $\Gamma(x,t)$ is continuous
in $t$. From equation (\ref{eq:5-14}), we can write:
\begin{equation}
V(x,t)-V(x,t+\epsilon)=\int_{t}^{t+\epsilon}F(y(s))ds+\dfrac{1}{2}\int_{t}^{t+\epsilon}(BN^{-1}B^{*}\Gamma(y(s),s),\Gamma(y(s),s))ds+V(y(\epsilon),t+\epsilon)-V(x,t+\epsilon).
\label{eq:5-19}
\end{equation}
We then have: 
\begin{small}
\begin{equation}
\begin{aligned}
&\ V(y(\epsilon),t+\epsilon)-V(x,t+\epsilon)\\
=&\ V\left(x+\int_{t}^{t+\epsilon}A(y(s))ds-\int_{t}^{t+\epsilon}BN^{-1}B^{*}\Gamma(y(s),s)ds,t+\epsilon\right)-V(x,t+\epsilon)\\
=&\ \left(\Gamma(x,t+\epsilon),\int_{t}^{t+\epsilon}A(y(s))ds-\int_{t}^{t+\epsilon}BN^{-1}B^{*}\Gamma(y(s),s)ds\right)\\
&\ +\int_{0}^{1}\left(\Gamma(x+\theta\int_{t}^{t+\epsilon}(A(y(s))-BN^{-1}B^{*}\Gamma(y(s)))ds,t+\epsilon)-\Gamma(x,t+\epsilon),\int_{t}^{t+\epsilon}(A(y(s))-BN^{-1}B^{*}\Gamma(y(s)))ds\right)d\theta.
\end{aligned}\label{eq:5-20}
\end{equation}
\end{small}%
 Using the fact that $\Gamma(x,t)$ is uniformly Lipschitz in $x$ and
continuous in $t,$ we obtain easily from (\ref{eq:5-20}) that:
\[
\dfrac{V(y(\epsilon),t+\epsilon)-V(x,t+\epsilon)}{\epsilon}\rightarrow(\Gamma(x,t),A(x)-BN^{-1}B^{*}\Gamma(x,t)).
\]
Then, dividing (\ref{eq:5-19}) by $\epsilon$ and letting $\epsilon$
tend to $0,$ we obtain the PDE (\ref{eq:5-18}), recalling (\ref{eq:5-16}). The intial condition in (\ref{eq:5-18}) is trivial. If we take
the gradient in $x$ of (\ref{eq:5-18}), we recognize equation (\ref{eq:2-5}).
Since this equation has a unique solution, the solution of (\ref{eq:5-18})
is also unique (easy checking). This completes the proof.  
\end{proof}

\section{APPLICATION TO MEAN FIELD TYPE CONTROL THEORY}
\label{application_MFTCT}
\subsection{WASSERSTEIN SPACE}
Denote by $\mathcal{P}_2(\mathbb{R}^n)$ the Wasserstein space of Borel probability measures $m$ on $\mathbb{R}^n$ such that $\int_{\mathbb{R}^n}|x|^2dm(x)<\infty$, with the metric
\begin{align}
    W_2(\mu,\nu) = \sqrt{\inf\Bigg\{\int|x-y|^2d\pi(x,y):\pi\in\Pi(\mu,\nu)\Bigg\}},
\end{align}
where $\Pi(\mu,\nu)$ is the space of all Borel probability measures on $\mathbb{R}^n\times\mathbb{R}^n$ whose first and second marginals are $\mu$ and $\nu$ respectively.
\subsection{FUNCTIONAL DERIVATIVES}
Let $F$ be a functional on $\mathcal{P}_2(\mathbb{R}^n)$. We recall the idea of the functional derivative here.
\begin{definition}
$F$ is said to have a functional derivative if there exists a continous function $\dfrac{dF}{dm}:\mathcal{P}_2(\mathbb{R}^n)\times \mathbb{R}^n \to \mathbb{R}$, such that for some $c:\mathcal{P}_2(\mathbb{R}^n)\to [0,\infty)$ which is bounded on bounded subsets, we have 
\begin{align}
    \Bigg|\dfrac{dF}{dm}(m,x)\Bigg|\leq c(m)(1+|x|^2)
\end{align}
and
\begin{align}
    F(m') - F(m) = \int_0^1\int_{\mathbb{R}^n}\dfrac{dF}{dm}(m+\theta(m'-m))(x)d(m'-m)(x)d\theta.
\end{align}
We require also $\int_{\mathbb{R}^n} \frac{dF}{dm}(m,x)dm(x)=0$ as it is unique up to a constant by definition.
\end{definition}
\begin{definition}
$F$ is said to have a second order functional derivative if there exists a continuous function $\dfrac{d^2F}{dm^2}:\mathcal{P}_2\times \mathbb{R}^n\times \mathbb{R}^n \to \mathbb{R}$ such that, for some $c:\mathcal{P}_2(\mathbb{R}^n)\to [0,\infty)$ which is bounded on bounded subsets, we have
\begin{align}
    \Bigg|\dfrac{d^2F}{dm^2}(m,x,\tilde{x}) \Bigg|\leq c(m)(1+|x|^2 + |\tilde{x}|^2)
\end{align}
and
\begin{align}
    F(m')-F(m) =& \int_{\mathbb{R}^n}\dfrac{dF}{dm}(m)(x)d(m'-m)(x)\\
    &+\int_0^1\int_0^1\theta \dfrac{d^2F}{dm^2} (m+\lambda\theta(m'-m))(x,\tilde{x})d(m'-m)(x)d(m'-m)(\tilde{x})d\lambda d\theta.\nonumber
\end{align}
Again we require also $\int \frac{d^2F}{dm}(m,x,\tilde{x})dm(\tilde{x}) = 0$ $\forall x$ and $\int \frac{d^2F}{dm}(m,x,\tilde{x})dm(x) = 0$ $\forall \tilde{x}$ as it is unique up to a constant. Note also that 
\begin{align}
    \dfrac{d^2F}{dm^2}(m)(x,\tilde{x})=\dfrac{d^2 F}{dm^2}(m)(\tilde{x},x).
\end{align}
\end{definition}
\noindent We write $D\frac{dF}{dm}(m)(x)$ to mean differentiating with respect to $x$, and $D_1\frac{d^2F}{dm^2}(m)(x_1,x_2)$ and $D_2\frac{d^2F}{dm^2}(m)(x_1,x_2)$ to denote partial differentiation with respect to $x_1$ and $x_2$, respectively. 
\subsection{MEAN FIELD TYPE CONTROL PROBLEMS}
\label{mean_field_type_control}
We introduce the setting of a mean-field type control problem.
Consider real-valued functions $f(x,m)$ and $h(x,m)$ defined on $\mathbb{R}^n\times \mathcal{P}_2(\mathbb{R}^n)$. We define
\begin{align*}
    F(m)&:= \int_{\mathbb{R}^n}f(x,m)dm(x),\\
    F_T(m)&:= \int_{\mathbb{R}^n}h(x,m)dm(x).
\end{align*}
Fix a $m\in\mathcal{P}_2(\mathbb{R}^n)$. Let $A, B:\mathbb{R}^n\to \mathbb{R}^n$ be matrices, and $N:\mathbb{R}^n\to\mathbb{R}^n$ be a self-adjoint invertible matrix. We make the following assumptions on $f$, $h$, $B$, $N$, $A$. We assume that
\begin{enumerate}
    \item [(A1)] $\forall x\in\mathbb{R}^n$,
    \begin{align}
        BN^{-1}B^*x\cdot x \geq m|x|^2, m>0.
    \end{align}
    \item [(A2)] $f$ is regular enough such that the following is justifiable. $\forall y\in\mathbb{R}^n$,
    \begin{align}
        \nu |y|^2 \leq &\dfrac{\partial^2 f}{\partial x^2}(x,m)y\cdot y \leq M |y|^2,\\
        \nu |y|^2 \leq D_{\xi}^2 &\dfrac{\partial f}{\partial m}(x,m)(\xi)y\cdot y \leq M|y|^2,\\
        D_{\xi}&\dfrac{\partial ^2 f}{\partial x\partial m}(x,m)(\xi) = 0,\\
        D_{\xi_1}D_{\xi_2}&\dfrac{\partial^2 f}{\partial m^2}(x,m)(\xi_1,\xi_2) = 0.
    \end{align}
    \item [(A3)] $h$ is regular enough such that the following is justifiable. $\forall y\in\mathbb{R}^n$,
    \begin{align}
        \nu_T |y|^2 \leq &\dfrac{\partial^2 h}{\partial x^2}(x,m)y\cdot y \leq M_T |y|^2,\\
        \nu_T |y|^2 \leq D_{\xi}^2 &\dfrac{\partial h}{\partial m}(x,m)(\xi)y\cdot y \leq M_T|y|^2,\\
        D_{\xi}&\dfrac{\partial ^2 h}{\partial x\partial m}(x,m)(\xi) = 0,\\
        D_{\xi_1}D_{\xi_2}&\dfrac{\partial^2 h}{\partial m^2}(x,m)(\xi_1,\xi_2) = 0.
    \end{align}
    \item [(A4)] For the matrices, we have
    \begin{align}
        |A|<M|BN^{-1}B^*|,\text{ with }|\cdot| \text{ the matrix $2$-norm.}
    \end{align}
\end{enumerate}
The set of our feasible control is $L^2(t,T;L^2_m(\mathbb{R}^n;\mathbb{R}^n))$, i.e., $v_{\cdot,m,t}(\cdot)\in L^2(t,T;L^2_m(\mathbb{R}^n;\mathbb{R}^n))$ if and only if
\begin{align*}
    \int_t^T\int_{\mathbb{R}^n}|v_{x,m,t}(s)|^2dm(x)ds<\infty.
\end{align*}
To each $v_{\cdot,m,t}(\cdot)\in L^2(t,T;L^2_m(\mathbb{R}^n;\mathbb{R}^n))$ and $x\in\mathbb{R}^n$ we associate the state
\begin{align}
    x_{x,m,t}(s;v) := x+\int_t^s \Big[ Ax_{x,m,t}(\tau;v)+Bv_{x,m,t}(\tau)\Big]d\tau.
\end{align}
Note that $x_{\cdot,m,t}(\cdot)\in L^2(t,T;L^2_m(\mathbb{R}^n;\mathbb{R}^n))$. We define the objective functional on $L^2(t,T;L^2_m(\mathbb{R}^n;\mathbb{R}^n))$ by
\begin{align}
    J_{m,t}(v) :=& \int_t^T  F(x_{\cdot,m,t}(s;v)_\# m)ds + F_T(x_{\cdot,m,T}(s;v)_\# m)\\
    &+\dfrac{1}{2}\int_{t}^{T}\int_{\mathbb{R}^n}v_{x,m,t}^*(\tau)Nv_{x,m,t}(\tau)dm(x)d\tau \nonumber.
\end{align}
Thus the value function is 
\begin{align}
    V(m,t) := \inf_{v\in L^2(t,T;L^2_m(\mathbb{R}^n;\mathbb{R}^n))} J_{m,t}(v).
\end{align}
\subsection{THE HILBERT SPACE $\mathcal{H}_m$ AND THE PUSH-FORWARD MAP}
We proceed as our previous work  \cite{stochasticv2}.
\subsubsection{SETTINGS}
Fix $m \in \mathcal{P}_2(\mathbb{R}^n)$, we define $\mathcal{H}_m:= L^2_m(\mathbb{R}^n;\mathbb{R}^n)$, the set of all measurable vector field $\Phi$ such that $\int_{\mathbb{R}^n}|\Phi(x)|^2dm(x)<\infty$. We equip $\mathcal{H}_m$ with the inner product
\begin{align}
    \langle X,Y\rangle_{\mathcal{H}_m}:= \int_{\mathbb{R}^n} X(x)\cdot Y(x)dm(x).
\end{align}
Write the corresponding norm as $\|X\|_{\mathcal{H}_m} = \sqrt{\langle X,X\rangle_{\mathcal{H}_m}}$.
\begin{definition}
For $m\in\mathcal{P}_2$, $X\in \mathcal{H}_m$, define $X\otimes m \in \mathcal{P}_2$ as follow: for all $\phi:\mathbb{R}^n\to\mathbb{R}$ such that $x\mapsto \dfrac{|\phi(x)|}{1+|x|^2}$ is bounded, define
\begin{align}
    \int \phi(x)d(X\otimes m)(x) := \int \phi(X(x))dm(x).
\end{align}
\end{definition}
\begin{rem}
This actually is the push-forward map as we are working on the deterministic case. We write as $X\otimes m$ to align with our treatment of the stochastic case in \cite{stochasticv2}. 
\end{rem}
\noindent We recall several useful properties from \cite{stochasticv2}.
\begin{prop}
\label{property_push_forward}
We have the following properties:
\begin{enumerate}
    \item Let $X$, $Y\in \mathcal{H}_m$, and suppose $X\circ Y\in \mathcal{H}_m$. Then $(X\circ Y)\otimes m = X\otimes (Y\otimes m)$.
    \item If $X(x) = x$ is the identity map, then $X\otimes m =m$.
    \item Let $X\in\mathcal{H}_m$, denote the space $L_X^2(t,T;\mathcal{H}_m)$ to be the set of all processes in $L^2(t,T;\mathcal{H}_m)$ that is adapted to $\sigma(X)$. There exists a natural linear isometry between $L^2_X(t,T;\mathcal{H}_m)$ and $L^2(t,T;\mathcal{H}_{X\otimes m})$.
\end{enumerate}
\end{prop}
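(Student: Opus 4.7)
The plan is to handle (1) and (2) as direct unwindings of the push-forward definition, reserving the real work for part (3). For (1), I would pick an arbitrary admissible test function $\phi$ (satisfying $|\phi(x)|/(1+|x|^2)$ bounded) and apply the defining identity twice:
\begin{align*}
\int \phi(x)\,d((X\circ Y)\otimes m)(x) &= \int \phi(X(Y(x)))\,dm(x) \\
&= \int \phi(X(y))\,d(Y\otimes m)(y) \\
&= \int \phi(x)\,d(X\otimes(Y\otimes m))(x).
\end{align*}
The hypothesis $X\circ Y\in\mathcal{H}_m$ is exactly what guarantees $X\otimes(Y\otimes m)\in\mathcal{P}_2(\mathbb{R}^n)$, since its second moment equals $\int|X(Y(x))|^2\,dm(x)$. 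Uniqueness of a probability measure determined by its action on admissible test functions then yields the equality. Part (2) is the one-line computation $\int\phi(x)\,d(X\otimes m)(x)=\int\phi(X(x))\,dm(x)=\int\phi(x)\,dm(x)$, followed by the same uniqueness.

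For part (3), I would define the candidate map $\mathcal{I}:L^2(t,T;\mathcal{H}_{X\otimes m})\to L^2_X(t,T;\mathcal{H}_m)$ by $(\mathcal{I}\Phi)(s)(x):=\Phi(s)(X(x))$. The key norm identity
\[
\int_t^T\!\int_{\mathbb{R}^n}|\Phi(s)(X(x))|^2\,dm(x)\,ds = \int_t^T\!\int_{\mathbb{R}^n}|\Phi(s)(y)|^2\,d(X\otimes m)(y)\,ds
\]
follows by applying the definition of $X\otimes m$ componentwise with the scalar integrand $y\mapsto|\Phi(s)(y)|^2$ (which is admissible since $\Phi(s)\in\mathcal{H}_{X\otimes m}$), and then integrating in $s$ via Fubini. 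This simultaneously shows $\mathcal{I}\Phi\in L^2(t,T;\mathcal{H}_m)$, that $\mathcal{I}$ is a linear isometry, and that the image is $\sigma(X)$-measurable at each time by construction. For surjectivity onto $L^2_X(t,T;\mathcal{H}_m)$, given $\Psi$ with $\Psi(s)(\cdot)$ $\sigma(X)$-measurable for each $s$, I would invoke the Doob--Dynkin lemma to factor $\Psi(s)(x)=\Phi(s)(X(x))$ for some Borel $\Phi(s,\cdot)$, with the square-integrability of $\Phi(s,\cdot)$ against $X\otimes m$ transferring from that of $\Psi(s,\cdot)$ against $m$ by the same change of variables.

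The main obstacle I anticipate is the joint measurability in $(s,y)$ of the factorizing map $\Phi$ arising in the surjectivity step: Doob--Dynkin produces a Borel $\Phi(s,\cdot)$ for each fixed $s$, but a priori these choices need not assemble into a jointly measurable function on $[t,T]\times\mathbb{R}^n$. I would bypass this by first establishing the bijective isometry on the dense subspace of simple processes $\Psi(s)(x)=\sum_i \chi_{[t_i,t_{i+1})}(s)\,\Psi_i(x)$ with finitely many $\sigma(X)$-measurable $\Psi_i\in\mathcal{H}_m$, where Doob--Dynkin is applied only finitely many times and joint measurability is automatic. Density of such simple processes in $L^2_X(t,T;\mathcal{H}_m)$, combined with the already-established isometry property and the completeness of $L^2(t,T;\mathcal{H}_{X\otimes m})$, then extends $\mathcal{I}^{-1}$ uniquely to the full space.
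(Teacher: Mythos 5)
Your proposal is correct. The paper itself gives no argument for this proposition --- it simply defers to Sections 2 and 3 of the cited reference \cite{stochasticv2} --- so your write-up supplies a self-contained proof of exactly the standard kind: change of variables for push-forward measures in parts (1) and (2), and for part (3) the composition map $\Phi\mapsto\Phi(s)(X(\cdot))$ as an isometry, with surjectivity via Doob--Dynkin. Your anticipation of the joint-measurability issue in the surjectivity step, and its resolution through simple processes dense in $L^2_X(t,T;\mathcal{H}_m)$, is the right fix and is the part most often glossed over. One small wrinkle in part (1): the intermediate test function $\phi\circ X$ need not satisfy the growth bound $|\phi(X(y))|\leq c(1+|y|^2)$ required by the paper's definition of $\otimes$, since $X\in\mathcal{H}_{Y\otimes m}$ gives only square-integrability, not pointwise quadratic growth. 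This is harmless --- the identity $\int\psi\,d(Y\otimes m)=\int\psi\circ Y\,dm$ extends to all nonnegative Borel $\psi$ by a monotone class argument, after which both sides are applied to $\psi=\phi\circ X$ --- but a sentence acknowledging the extension would make the step airtight.
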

\begin{proof}
Please refer to \cite{stochasticv2} Section 2 and Section 3.
\end{proof}
\subsubsection{EXTENDING THE DOMAIN OF FUNCTIONS TO $\mathcal{H}_m$}
\label{extending_functions}
The proofs in this section is standard, we therefore omit unless specified. Readers may refer to \cite{stochasticv2} Section 2. Let $F:\mathcal{P}_2(\mathbb{R}^n)\to \mathbb{R}$, we extend $F$ to be a function on $\mathcal{H}_m$ by $X\mapsto F(X\otimes m)$, $\forall X\in \mathcal{H}_m$. When the domain is $\mathcal{H}_m$, we can talk about G\^ateaux derivative. We actually have the following relation between the G\^ateaux derivative on $\mathcal{H}_m$ and its functional derivative:
\begin{prop}
\label{first_order_gateaux}
Let $F:\mathcal{P}_2(\mathbb{R}^n)\mapsto\mathbb{R}$ have a functional derivative $\frac{dF}{dm}$, and $x\mapsto\frac{dF}{dm}(m,x)$ is continuously differentiable in $\mathbb{R}^n$. Assume that $D\frac{dF}{dm}(m,x)$ is continuous in both $m$ and $x$, and
\begin{align}
    \Bigg| D\dfrac{dF}{dm}(m)(x)\Bigg| \leq c(m)(1+|x|)
\end{align}
for some constant $c(m)$ depending only on $m$. Denote the G\^ateaux derivative as $D_X F(X\otimes m)$, we have
\begin{align}
    D_X F(X\otimes m) = D\frac{dF}{dm}(X\otimes m)(X(\cdot)).
\end{align}
\end{prop}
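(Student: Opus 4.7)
The plan is to start from the very definition of the G\^{a}teaux derivative and reduce everything to the functional derivative identity stated earlier in the section. By definition I need to compute, for each $Y\in\mathcal{H}_m$, the limit
\[
\lim_{\epsilon\to 0}\frac{F((X+\epsilon Y)\otimes m)-F(X\otimes m)}{\epsilon}
\]
and verify that it equals $\langle D\frac{dF}{dm}(X\otimes m)(X(\cdot)),Y\rangle_{\mathcal{H}_m}$. The first step is to feed $m_0=X\otimes m$ and $m_1=(X+\epsilon Y)\otimes m$ into the first-order functional-derivative identity, writing
\[
F((X+\epsilon Y)\otimes m)-F(X\otimes m)=\int_0^1\int_{\mathbb{R}^n}\frac{dF}{dm}(m_\theta)(x)\,d\bigl((X+\epsilon Y)\otimes m-X\otimes m\bigr)(x)\,d\theta,
\]
with $m_\theta:=X\otimes m+\theta((X+\epsilon Y)\otimes m-X\otimes m)$.

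The second step is to push the measure back to $m$ via the definition of $\otimes$: the inner integral becomes
\[
\int_{\mathbb{R}^n}\Bigl[\tfrac{dF}{dm}(m_\theta)(X(x)+\epsilon Y(x))-\tfrac{dF}{dm}(m_\theta)(X(x))\Bigr]dm(x).
\]
I would then apply the one-dimensional fundamental theorem of calculus in the spatial variable (using that $\frac{dF}{dm}(m,\cdot)$ is $C^1$) to rewrite the bracket as $\epsilon\int_0^1 D\frac{dF}{dm}(m_\theta)(X+s\epsilon Y)\cdot Y\,ds$. Dividing by $\epsilon$ leaves a triple integral in $(\theta,s,x)$ whose integrand converges pointwise, as $\epsilon\to 0$, to $D\frac{dF}{dm}(X\otimes m)(X(x))\cdot Y(x)$, thanks to the joint continuity of $D\frac{dF}{dm}$ in $(m,x)$ and the continuity $\theta\mapsto m_\theta\to X\otimes m$ in $W_2$.

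The last step is to pass the limit inside by dominated convergence. This is where the main (mild) obstacle lies: one must dominate $|D\frac{dF}{dm}(m_\theta)(X+s\epsilon Y)\cdot Y|$ by an $L^1(dm\,d\theta\,ds)$-function uniformly in small $\epsilon$. The growth assumption $|D\frac{dF}{dm}(m)(x)|\le c(m)(1+|x|)$ combined with the fact that $c$ is bounded on bounded subsets of $\mathcal{P}_2$ yields a uniform constant $C$ on a $W_2$-neighborhood of $X\otimes m$; then the integrand is bounded by $C(1+|X(x)|+|Y(x)|)|Y(x)|$, which is integrable against $dm$ because $X,Y\in\mathcal{H}_m$. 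After taking the limit, the remaining integrals in $\theta$ and $s$ are trivial, and one is left with
\[
\int_{\mathbb{R}^n} D\tfrac{dF}{dm}(X\otimes m)(X(x))\cdot Y(x)\,dm(x)=\bigl\langle D\tfrac{dF}{dm}(X\otimes m)(X(\cdot)),Y\bigr\rangle_{\mathcal{H}_m},
\]
which is the claim. The linearity and continuity of this expression in $Y$ are clear from the growth bound, so the object genuinely is the G\^{a}teaux derivative.
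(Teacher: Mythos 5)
Your argument is correct and is exactly the standard proof that the paper omits (deferring to Section~2 of \cite{stochasticv2}): expand $F((X+\epsilon Y)\otimes m)-F(X\otimes m)$ via the defining identity of the functional derivative, transfer the integral back to $dm$ through the push-forward, apply the fundamental theorem of calculus in the spatial variable, and pass to the limit by dominated convergence using the linear growth of $D\frac{dF}{dm}$. The only point worth flagging is that your domination step reads the hypothesis ``$c(m)$ depending only on $m$'' as locally bounded in $W_2$, which is consistent with the convention the paper adopts in its definitions of functional derivatives, so the proof stands as written.
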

\noindent We now look at the second order G\^ateaux derivative, denoted as $D^2_X F(X\otimes m)$, note that $D^2_X F(X\otimes m)$ is a bounded linear operator from $\mathcal{H}_m$ to $\mathcal{H}_m$. 
\begin{prop}
In addition to the assumptions in Proposition \ref{first_order_gateaux}, let $F$ has a second order functional derivative $\frac{d^2F}{dm^2}(m)(x_1,x_2)$, assume also $D^2\frac{dF}{dm}(m)(x)$, $D_1\frac{d^2F}{dm^2}(m)(x_1,x_2)$, $D_2\frac{d^2F}{dm^2}(m)(x_1,x_2)$ and \\$D_1D_2\frac{d^2F}{dm^2}(m)(x_1,x_2)$ exist and are continuous, such that
\begin{align}
    \left| D^2\dfrac{dF}{dm}(m)(x) \right| &\leq d(m),\\
    \left| D_1D_2\dfrac{d^2F}{dm^2}(m)(x_1,x_2)\right| &\leq d'(m),
\end{align}
where $d$, $d'$ are constants depending on $m$ only, and $|\cdot|$ is the matrix $2$-norm. Then we have:
\begin{align}
    D^2_X F(X\otimes m)Y(x) = D^2\dfrac{dF}{dm}(X\otimes m)(X(x))Y(x) + \int_{\mathbb{R}^n}D_1D_2\dfrac{d^2F}{dm^2}(X\otimes m)(X(x),X(x'))Y(x')dm(x').
\end{align}
\end{prop}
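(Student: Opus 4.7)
The plan is to differentiate the first-order formula from Proposition \ref{first_order_gateaux} once more in a direction $Y\in\mathcal{H}_m$. Concretely, I would set
\[
g(t)(x):=D_X F\bigl((X+tY)\otimes m\bigr)(x)=D\frac{dF}{dm}\bigl((X+tY)\otimes m\bigr)\bigl(X(x)+tY(x)\bigr),
\]
and compute $g'(0)(x)$ via the chain rule, which produces two contributions: an \emph{inner-variable} term from differentiating $X(x)+tY(x)$, and a \emph{measure} term from differentiating the first argument $(X+tY)\otimes m$.

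The inner-variable term comes from the partial of $D\frac{dF}{dm}(m,\cdot)$ at $X(x)$ applied to $Y(x)$, and produces $D^2\frac{dF}{dm}(X\otimes m)(X(x))Y(x)$; the bound $|D^2\frac{dF}{dm}|\leq d(m)$ provides the continuity needed to pass to the limit. For the measure term, I would invoke the identity
\[
D\frac{dF}{dm}(m')(x_0)-D\frac{dF}{dm}(m)(x_0)=\int_0^1\!\!\int_{\mathbb{R}^n}D_1\frac{d^2F}{dm^2}\bigl(m+\lambda(m'-m)\bigr)(x_0,\tilde x)\,d(m'-m)(\tilde x)\,d\lambda,
\]
obtained by differentiating the defining integral identity of $\frac{d^2F}{dm^2}$ in its first spatial variable and using the symmetry $\frac{d^2F}{dm^2}(m)(x,\tilde x)=\frac{d^2F}{dm^2}(m)(\tilde x,x)$. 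Plugging in $m=X\otimes m$, $m'=(X+tY)\otimes m$, $x_0=X(x)$, and then using the push-forward identity $\int\phi\,d((X+tY)\otimes m)=\int\phi(X+tY)\,dm$ together with a first-order Taylor expansion of $\phi(X(x')+tY(x'))$ about $X(x')$ with $\phi=D_1\frac{d^2F}{dm^2}(m)(X(x),\cdot)$, the measure contribution converges as $t\to 0$ to $\int_{\mathbb{R}^n}D_1 D_2\frac{d^2F}{dm^2}(X\otimes m)(X(x),X(x'))Y(x')\,dm(x')$, where the order of the gradients $D_1, D_2$ is interchangeable by equality of mixed partials.

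The main technical obstacle will be justifying the interchange of the $t\to 0$ limit with the $x'$- and $\lambda$-integrals. Dominated convergence does the job because the Taylor remainder admits the pointwise bound
\[
\Bigl|\tfrac{1}{t}\bigl(\phi(X(x')+tY(x'))-\phi(X(x'))\bigr)\Bigr|\leq\sup_{\eta\in[0,1]}\bigl|D\phi(X(x')+\eta tY(x'))\bigr|\cdot|Y(x')|\leq d'(m)|Y(x')|,
\]
and $Y\in L^1_m$ by Cauchy--Schwarz; continuity of $D_1D_2\frac{d^2F}{dm^2}$ jointly in $m$ and its spatial arguments then lets one replace $X\otimes m+\lambda((X+tY)\otimes m-X\otimes m)$ by $X\otimes m$ in the limit. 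Finally, the resulting map $Y\mapsto g'(0)$ is manifestly linear and bounded on $\mathcal{H}_m$ (by $(d(m)+d'(m))\|Y\|_{\mathcal{H}_m}$), so it coincides with the Gâteaux derivative $D_X^2 F(X\otimes m)Y$, which yields the claimed formula.
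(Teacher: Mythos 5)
Your argument is correct. The paper itself omits the proof of this proposition (deferring to Section~2 of \cite{stochasticv2}), but your route --- differentiating the first-order formula $D\frac{dF}{dm}((X+tY)\otimes m)(X(x)+tY(x))$ at $t=0$, splitting the increment into the spatial term and the measure term via the Taylor identity for $\frac{d^2F}{dm^2}$ along the segment of measures, and passing to the limit in $L^2_m$ by dominated convergence using the bounds $d(m)$, $d'(m)$ (uniform along the segment since $c,d,d'$ are bounded on bounded subsets of $\mathcal{P}_2$) --- is exactly the standard difference-quotient computation that the paper does spell out for the analogous statements about $D_XA$ and $d^2A$, so it is consistent with the intended proof.
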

\noindent Besides, we can view $F(X\otimes m)$ as $m\mapsto F(X \otimes m)$, in this case, we can talk about differentiation with respect to $m$, denote it as $\dfrac{\partial F}{\partial m}$. The following relation between $\dfrac{\partial F}{\partial m}$ and $\dfrac{dF}{dm}$ holds. 
\begin{prop}
Let $F:\mathcal{P}_2(\mathbb{R}^n)\mapsto\mathbb{R}^n$ have a functional derivative and fix $X\in \mathcal{H}_m$. We have
\begin{align}
    \dfrac{\partial F}{\partial m}(X\otimes m)(x) = \frac{dF}{dm}(X\otimes m)(X(x)).
\end{align}
\end{prop}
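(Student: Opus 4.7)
My plan is to exploit the linearity of the pushforward $m \mapsto X \otimes m$ (for fixed $X$) in the measure argument, apply the defining formula of $\frac{dF}{dm}$ along the straight-line path between $X \otimes m$ and $X \otimes m'$, and then reinterpret the resulting integral via change of variables so that it matches the defining formula of $\frac{\partial F}{\partial m}$.

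First I would record the elementary observation that $(X \otimes \cdot)$ is affine in the measure: for any Borel set $A$, $(X \otimes (\mu + \nu))(A) = \mu(X^{-1}A) + \nu(X^{-1}A)$, hence
\begin{equation*}
X \otimes (m + \theta(m' - m)) = X \otimes m + \theta(X \otimes m' - X \otimes m).
\end{equation*}
Consequently, the interpolating path in $\mathcal{P}_2(\mathbb{R}^n)$ from $X \otimes m$ to $X \otimes m'$ is exactly the pushforward of the interpolating path from $m$ to $m'$. Writing the increment $F(X \otimes m') - F(X \otimes m)$ with the defining formula of $\frac{dF}{dm}$ applied to these two measures, and using the pushforward identity $\int \phi \, d(X \otimes \mu) = \int \phi \circ X \, d\mu$, I would convert the integral against $d(X \otimes m' - X \otimes m)$ into one against $d(m' - m)$ of the integrand $\frac{dF}{dm}(X \otimes (m + \theta(m' - m)))(X(x))$. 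This produces the same shape as the defining formula of $\frac{\partial F}{\partial m}$ applied to the functional $m \mapsto F(X \otimes m)$.

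The main obstacle — really the only point requiring care — is uniqueness. Since a functional derivative is a priori defined only up to an additive constant (in $x$) depending on $m$, the comparison above only yields
\begin{equation*}
\frac{\partial F}{\partial m}(X \otimes m)(x) = \frac{dF}{dm}(X \otimes m)(X(x)) + C(m).
\end{equation*}
To pin down $C(m) = 0$, I would verify that both sides satisfy the normalization $\int \cdot \, dm(x) = 0$: the left side by the definition of $\frac{\partial F}{\partial m}$ as the functional derivative with respect to $m$, and the right side because
\begin{equation*}
\int \frac{dF}{dm}(X \otimes m)(X(x))\, dm(x) = \int \frac{dF}{dm}(X \otimes m)(y)\, d(X \otimes m)(y) = 0
\end{equation*}
by change of variables and the normalization of $\frac{dF}{dm}$. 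The subtle point is that the normalizations must both be taken against the \emph{same} reference measure $dm$ and not $d(X \otimes m)$, which is precisely what forces the change-of-variables maneuver. Once this is observed, the identity follows and the proof is complete.
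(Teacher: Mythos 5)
Your argument is correct and is essentially the intended one (the paper itself omits this proof, deferring to \cite{stochasticv2}): the linearity of $\mu\mapsto X\otimes\mu$, the change-of-variables identity $\int \phi\, d(X\otimes\mu)=\int \phi\circ X\, d\mu$ applied both to the interpolating measures and to the increment $d(m'-m)$, and the normalization check against $dm$ rather than $d(X\otimes m)$ are exactly the required ingredients. You have also correctly isolated the only delicate point, namely that the comparison of the two defining formulas determines the derivative only up to an additive constant, which is killed by the convention $\int \frac{dF}{dm}(\mu)(y)\,d\mu(y)=0$ applied at $\mu=X\otimes m$.
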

\noindent Now let $A:\mathbb{R}^n \to \mathbb{R}^n$, we extend it as $\forall X\in \mathcal{H}_m$, $X \mapsto A(X)\in \mathcal{H}_m$,
\begin{align}
    A(X)(x) = A(X(x)).
\end{align}
It is trivial to see that if $A^{-1}$ exists in $\mathbb{R}^n$, then $A^{-1}(X)(x) = A^{-1}(X(x))$ is the inverse of $A$ in $\mathcal{H}_m$. So is the transpose of $A$, if $A$ is a matrix. Again, we can talk about its G\^ateaux derivative. 
\begin{prop}
Let $A$ to be continously differentiable. Denote its derivative to be $dA$. Assume that there exists $k$ such that $|dA(x)|\leq k$ for all $x\in \mathbb{R}^n$, where $|\cdot|$ is the matrix $2$-norm. Then for all $X, Y \in \mathcal{H}_m$, we have
\begin{align}
    D_XA(X)Y(x) = dA(X(x))Y(x).
\end{align}
\end{prop}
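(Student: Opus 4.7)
The Gateaux derivative $D_X A(X)Y$ is defined as the $\mathcal{H}_m$-limit (if it exists) of the difference quotient $\varepsilon^{-1}[A(X+\varepsilon Y) - A(X)]$ as $\varepsilon \to 0$. The plan is to identify this limit with $dA(X(\cdot))Y(\cdot)$ by combining the pointwise fundamental theorem of calculus on $\mathbb{R}^n$ with the dominated convergence theorem to pass from pointwise convergence to convergence in $\mathcal{H}_m$.

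First I would check that each of the relevant objects lies in $\mathcal{H}_m$. Since $|dA(x)| \leq k$ for every $x$, the map $A:\mathbb{R}^n\to\mathbb{R}^n$ is globally $k$-Lipschitz, so $|A(X(x) + \varepsilon Y(x))| \leq |A(0)| + k(|X(x)| + \varepsilon |Y(x)|)$, which is square-integrable against $m$, hence $A(X+\varepsilon Y)\in\mathcal{H}_m$. Similarly $|dA(X(x))Y(x)|\leq k|Y(x)|$, so the candidate derivative $x\mapsto dA(X(x))Y(x)$ belongs to $\mathcal{H}_m$.

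Next, applying the fundamental theorem of calculus pointwise in $x$ yields
\begin{equation*}
\frac{A(X(x) + \varepsilon Y(x)) - A(X(x))}{\varepsilon} - dA(X(x))Y(x) = \int_0^1 \bigl[dA(X(x) + \theta \varepsilon Y(x)) - dA(X(x))\bigr] Y(x)\, d\theta =: R_\varepsilon(x).
\end{equation*}
For each fixed $x$, the continuity of $dA$ forces $R_\varepsilon(x)\to 0$ as $\varepsilon\to 0$, and the uniform bound on $dA$ yields the $m$-integrable domination $|R_\varepsilon(x)|^2 \leq 4k^2|Y(x)|^2$. The dominated convergence theorem then gives $\|R_\varepsilon\|_{\mathcal{H}_m}\to 0$, which is exactly the statement that the difference quotient converges in $\mathcal{H}_m$ to $dA(X(\cdot))Y(\cdot)$.

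The only subtle point is the last step: pointwise convergence alone would not be enough to produce a Gateaux derivative in an infinite-dimensional Hilbert space, and the uniform bound $|dA|\leq k$ is precisely what supplies the integrable dominating function $4k^2|Y(\cdot)|^2$ required to apply dominated convergence. Everything else in the argument is a routine book-keeping of the lifting construction introduced in the previous propositions.
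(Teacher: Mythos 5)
Your proof is correct, and it rests on the same core identity as the paper's: the pointwise fundamental theorem of calculus representation $\varepsilon^{-1}[A(X(x)+\varepsilon Y(x))-A(X(x))]=\int_0^1 dA(X(x)+\theta\varepsilon Y(x))Y(x)\,d\theta$ followed by dominated convergence justified by the uniform bound $|dA|\leq k$. The one genuine difference is the mode of convergence you establish: the paper pairs the difference quotient against an arbitrary test element $H\in\mathcal{H}_m$ and passes to the limit inside the inner product, which yields weak convergence of the difference quotient to $dA(X(\cdot))Y(\cdot)$, whereas you estimate $\|R_\varepsilon\|_{\mathcal{H}_m}$ directly and obtain norm convergence. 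Your version is marginally stronger and arguably cleaner, since the Gâteaux derivative is usually defined via norm convergence of the difference quotient; the paper's weak formulation suffices for its purposes (the limit candidate is already known to lie in $\mathcal{H}_m$, and the subsequent results only use the derivative through inner products), but your direct norm estimate closes that small gap at no extra cost. Your preliminary check that $A(X+\varepsilon Y)$ and $dA(X(\cdot))Y(\cdot)$ belong to $\mathcal{H}_m$ is also a useful piece of book-keeping the paper leaves implicit.
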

\begin{proof}
Let $X,Y, H\in\mathcal{H}_m$, then
\begin{align*}
    &\dfrac{1}{\epsilon} \Big\langle A(X+\epsilon Y) - A(X),H \Big\rangle_{\mathcal{H}_m}\\
    =&\dfrac{1}{\epsilon}\int_{\mathbb{R}^n}\Big[A(X(\xi)+\epsilon Y(\xi)) - A(X(\xi))\Big]\cdot H(\xi) dm(\xi)\\
    =&\int_{\mathbb{R}^n}\int_0^1 dA(X(\xi)+\theta \epsilon Y(\xi))Y(\xi)\cdot H(\xi) d\theta dm(\xi)\\
    \to &\int_{\mathbb{R}^n} dA(X(\xi))Y(\xi)\cdot H(\xi) dm(\xi) = \Big\langle dA(X(\cdot))Y(\cdot),H\Big\rangle_{\mathcal{H}_m}.
\end{align*}
\end{proof}
\begin{prop}
Let $A$ be twice continuously differentiable. Denote its second derivative to be $d^2A$. Note that $d^2A(x)(a,b) \in \mathbb{R}^n$, and $d^2A(x)(a,b) = d^2A(x)(b,a)$. Assume that there exists $k(x)$ such that $\forall a, b \in\mathbb{R}^n$, $|d^2A(x)(a,b)|\leq k(x)$, then we have
\begin{align}
    d^2A(X)(Y,W)(x) = d^2A(X(x))(Y(x),W(x)).
\end{align}
\end{prop}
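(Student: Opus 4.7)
The plan is to mimic the argument given for the first derivative of $A$ lifted to $\mathcal{H}_m$, but applied one level up. Since the previous proposition already identifies $D_X A(X)Y(x) = dA(X(x))Y(x)$, the natural route is to differentiate $X \mapsto D_X A(X) Y$ in the direction $W$ and use the weak/pointwise characterization by pairing with a test element $H \in \mathcal{H}_m$.

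Concretely, I would fix $X, Y, W, H \in \mathcal{H}_m$, form the difference quotient
\begin{align*}
\frac{1}{\epsilon}\bigl\langle D_X A(X+\epsilon W)Y - D_X A(X)Y,\, H\bigr\rangle_{\mathcal{H}_m}
= \frac{1}{\epsilon}\int_{\mathbb{R}^n}\bigl[dA(X(\xi)+\epsilon W(\xi))-dA(X(\xi))\bigr]Y(\xi)\cdot H(\xi)\,dm(\xi),
\end{align*}
and then write the finite-dimensional increment via the fundamental theorem of calculus,
\begin{align*}
dA(X(\xi)+\epsilon W(\xi)) - dA(X(\xi)) = \epsilon\int_0^1 d^2A\bigl(X(\xi)+\theta\epsilon W(\xi)\bigr)\bigl(W(\xi),\cdot\bigr)\,d\theta,
\end{align*}
which is valid because $A$ is $C^2$ on $\mathbb{R}^n$. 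After cancelling the factor $\epsilon$, the integrand becomes $\int_0^1 d^2A(X(\xi)+\theta\epsilon W(\xi))(W(\xi),Y(\xi))\cdot H(\xi)\,d\theta$, which converges pointwise to $d^2A(X(\xi))(W(\xi),Y(\xi))\cdot H(\xi)$ as $\epsilon\to 0$ by continuity of $d^2A$.

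To pass the limit under the integral, I would invoke dominated convergence: the bound $|d^2A(x)(a,b)| \le k(x)$ (or, more usefully, a bound of the form $|d^2A(x)(a,b)| \le k|a||b|$ that follows from bilinearity and the stated pointwise control) dominates the integrand by $k\,|W(\xi)|\,|Y(\xi)|\,|H(\xi)|$, which is integrable since $W,Y,H \in \mathcal{H}_m$ and, by Cauchy--Schwarz, the triple product lies in $L^1(m)$. This yields
\begin{align*}
\lim_{\epsilon\to 0}\frac{1}{\epsilon}\bigl\langle D_X A(X+\epsilon W)Y - D_X A(X)Y,\, H\bigr\rangle_{\mathcal{H}_m}
= \bigl\langle d^2A(X(\cdot))(Y(\cdot),W(\cdot)),\,H\bigr\rangle_{\mathcal{H}_m}.
\end{align*}
Since $H \in \mathcal{H}_m$ is arbitrary, this identifies the second G\^{a}teaux derivative pointwise as claimed, and symmetry in $(Y,W)$ is inherited from the symmetry of $d^2A$.

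The only delicate point is the dominated convergence step: the bound $|d^2A(x)(a,b)|\le k(x)$ as literally stated does not depend on $a,b$ and hence is insufficient for integrability; in practice one uses the bilinearity of $d^2A(x)$ together with a uniform operator bound (either a uniform $k$ or a Lipschitz-type estimate, as is implicit elsewhere in the paper, e.g.\ (\ref{eq:1-6})) to obtain the majorant $k|W(\xi)||Y(\xi)||H(\xi)|$. Apart from this slight sharpening of the hypothesis, the proof is essentially the $C^2$-analogue of the calculation already displayed for the first derivative, so no new ideas are required.
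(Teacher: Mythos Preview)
Your proposal is correct and follows essentially the same route as the paper: form the difference quotient $\frac{1}{\epsilon}\langle D_X A(X+\epsilon W)Y - D_X A(X)Y, H\rangle_{\mathcal{H}_m}$, expand using the previous proposition and the fundamental theorem of calculus on $\mathbb{R}^n$, then pass to the limit. In fact you supply more justification than the paper does---the paper simply writes the arrow without discussing dominated convergence, and your observation that the hypothesis $|d^2A(x)(a,b)|\le k(x)$ as literally stated is too weak (and should be read as an operator-norm bound on the bilinear form) is a valid caveat that the paper glosses over.
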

\begin{proof}
Let $X, Y, W, H \in\mathcal{H}_m$, then
\begin{align*}
    &\dfrac{1}{\epsilon}\Big\langle D_XA(X+W)Y - D_XA(X)Y, H\Big\rangle_{\mathcal{H}_m}\\
    =&\dfrac{1}{\epsilon}\int_{\mathbb{R}^n}\Big[dA(X(\xi)+\epsilon W(\xi))Y(\xi) -dA(X(\xi))Y(\xi) \Big]\cdot H(\xi) dm(\xi)\\
    =&\int_{\mathbb{R}^n}\int_0^1 d^2A(X(\xi)+\theta\epsilon W(\xi))(Y(\xi),W(\xi))\cdot H(\xi) d\theta dm(\xi)\\
    \to& \int_{\mathbb{R}^n}d^2A(X(\xi))(Y(\xi),W(\xi))\cdot H(\xi)dm(\xi).
\end{align*}
\end{proof}
\subsection{CONTROL PROBLEM IN THE HILBERT SPACE $\mathcal{H}_m$}
Recall the definitions of $A$, $B$, $N$, $F$, $F_T$ in Section \ref{mean_field_type_control}. Extend the functions as in Section \ref{extending_functions}. We assume (A1), (A2), (A3) and (A4). It is not hard to derive (\ref{eq:1-8}), (\ref{eq:1-7}) and (\ref{eq:4-7}) from the assumptions. Note that in our case, $b =0$.\\
Now fix $X\in\mathcal{H}_m$ as our initial data. For given $v_{Xt} \in L^2_X(t,T;\mathcal{H}_m)$ (subscript $X$ and $t$ to address the measurability and starting time), consider the dynamics:
\begin{align}
\label{infinite_dimensional_dynamics}
    X(s) = X + \int_t^s \Big[ AX(\tau) + Bv_{Xt}(\tau)\Big] d\tau. 
\end{align}
Denote the process as $X_{Xt}(s) = X_{Xt}(s;v_{Xt})$. Define the cost functional:
\begin{equation}
\label{Objective_functional}
    J_{Xt}(v_{Xt}) := \int_t^T  F(X_{Xt}(s)\otimes m)ds + F_T(X_{Xt}(T)\otimes m)+\dfrac{1}{2}\int_{t}^{T}\langle v_{Xt}(\tau),Nv_{Xt}(\tau)\rangle_{\mathcal{H}_m}d\tau, 
\end{equation}
and the value function is
\begin{align}
    V(X,t) := \inf_{v_{Xt}\in L^2_X(t,T;\mathcal{H}_m)} J_{Xt}(v_{Xt}).
\end{align}
This is in the form of our concerned model in Section 2, with the Hilbert space being $\mathcal{H}_m$.\\
While (\ref{infinite_dimensional_dynamics}) is infinite dimensional, there is a finite dimensional view point of it. For $v_{Xt}\in L_X^2(t,T;\mathcal{H}_m)$,  by Proposition \ref{property_push_forward}, let $\tilde{v}\in L^2(t,T;\mathcal{H}_{X\otimes m})$ be the representative of $v_{Xt}$. Consider 
\begin{align}
    x(s) = x+\int_t^s \Big[ Ax(\tau)+B\tilde{v}(\tau,x)\Big]d\tau.
\end{align}
Denote the solution to be $x(s;x,\tilde{v}(\cdot,x))$. Then we have
\begin{align*}
    X_{Xt}(s;v_{Xt})(x) = x(s;X(x),\tilde{v}(\cdot,X(x))).
\end{align*}
We introduce the notation $X_{xt}(\cdot)$ with a lowercase letter for $x$ to mean $x(\cdot;x,\tilde{v}(\cdot,x))$, and $v_{\cdot t}(s)$ to mean $\tilde{v}(s,\cdot)$. From above we can conclude that the law of $X_{Xt}(s;v_{Xt}(\cdot))$ is $x(s;\cdot,\tilde{v}(\cdot,\cdot))\otimes(X\otimes m)$. Hence the cost functional (\ref{Objective_functional}) can be written as
\begin{align}
    J_{Xt}(v_{Xt}) = &\int_t^T  F(X_{Xt}(s)\otimes m)ds + F_T(X_{Xt}(T)\otimes m)+\dfrac{1}{2}\int_{t}^{T}\langle v_{Xt}(\tau),Nv_{Xt}(\tau)\rangle_{\mathcal{H}_m}d\tau\\
    = &\int_t^T F(x(s;\cdot,\tilde{v}(\cdot,\cdot))\otimes(X\otimes m))ds + F_T(x(T;\cdot,\tilde{v}(\cdot,\cdot))\otimes(X\otimes m))\nonumber\\
    &+\dfrac{1}{2}\int_{t}^{T}\langle v_{Xt}(\tau),Nv_{Xt}(\tau)\rangle_{\mathcal{H}_m}d\tau\nonumber\\
    =: & J_{X\otimes m,t}\nonumber,
\end{align}
that means $J$ depends on $X$ only through $X\otimes m$. Respectively,
\begin{align}
    V(X,t) = \inf_{v_{Xt}\in L^2_X(t,T;\mathcal{H}_m)} J_{Xt}(v_{Xt}) = \inf_{v_{Xt}\in L^2_X(t,T;\mathcal{H}_m)} J_{X\otimes m, t}(v_{Xt}) =:V(X\otimes m,t).
\end{align}
\subsection{NECESSARY AND SUFFICIENT CONDITION FOR OPTIMALITY}
Assume (A1), (A2), (A3) and (A4), we conclude from Theorem \ref{theo5-1} that there exists unique optimal control $\hat{v}_{Xt}(s) = -N^{-1}B^*Z_{Xt}(s)$, where $Z_{Xt}(s)$ together with $Y_{Xt}(s)$ are the unique solution of the system
\begin{align}
    Y_{Xt}(s) &= X + \int_t^s \Big[ AY_{Xt}(\tau)-BN^{-1}B^*Z_{Xt}(\tau)\Big]d\tau, \\
    Z_{Xt}(s) &= \int_s^T \Big[ ( AY_{Xt}(\tau))^*Z_{Xt}(\tau) + D_XF(Y_{Xt}(\tau)\otimes m)\Big] + D_XF_T(Y_{Xt}(T)\otimes m).
\end{align}
Again, because $L^2_X(t,T;\mathcal{H}_m)$ is isometric to $L^2(t,T;\mathcal{H}_{X\otimes m})$, there exists $Y_{\xi t}(s)$, $Z_{\xi t}(s)$ such that $Y_{Xt} = Y_{\xi t}|_{\xi = X}$ and $Z_{Xt} = Z_{\xi t}|_{\xi = X}$, $(Y_{\xi t},Z_{\xi t})$ solving
\begin{align}
    Y_{\xi t}(s) & =\xi + \int_t^s \Big[ AY_{\xi t}(\tau)-BN^{-1}B^*Z_{\xi t}(\tau)\Big]d\tau, \\
    Z_{\xi t}(s) &= \int_s^T \Bigg[ (AY_{\xi t}(\tau))^*Z_{\xi t}(\tau) + D\dfrac{dF}{dm}(Y_{\cdot t}(\tau)\otimes (X\otimes m))(Y_{\xi t}(\tau))\Bigg] \\
    &\quad+ D\dfrac{dF_T}{dm}(Y_{\cdot t}(T)\otimes (X\otimes m))(Y_{\xi t}(T)).\nonumber
\end{align}
As $(Y_{\xi t},Z_{\xi t})$ depends on $m$ through $X\otimes m$, we write $(Y_{\xi,X\otimes m, t},Z_{\xi, X\otimes m, t})$. We can write the value function as
\begin{align}
    V(X,t) &= \int_t^T  F(Y_{Xt}(s)\otimes m)ds + F_T(Y_{Xt}(T)\otimes m)+\dfrac{1}{2}\int_{t}^{T}\langle N^{-1}B^*Z_{Xt}(\tau),B^*Z_{Xt}(\tau)\rangle_{\mathcal{H}_m}d\tau\\
    &= \int_t^T F(Y_{\cdot,X\otimes m,t}(s)\otimes (X\otimes m)) ds+ F_T(Y_{\cdot,X\otimes m,t}(T)\otimes (X\otimes m))\nonumber\\
    &\quad+\dfrac{1}{2}\int_{t}^{T}\int_{\mathbb{R}^n} N^{-1}B^*Z_{\xi,X\otimes m,t}(\tau) \cdot B^*Z_{\xi,X\otimes m,t}(\tau)d(X\otimes m)(\xi)d\tau \nonumber\\
   & =V(X\otimes m, t)\nonumber.
\end{align}
In particular, if we choose $X$ to be the identity function, i.e., $X(x) = x$, recall that $X\otimes m = m$, there exists $(Y_{x,m, t},Z_{x, m, t})$ solving
\begin{align}
    Y_{x,m,t}(s) &= x + \int_t^s \Big[ AY_{x,m,t}(\tau)-BN^{-1}B^*Z_{x,m,t}(\tau)\Big]d\tau, \\
    Z_{x,m,t}(s) &= \int_s^T \Bigg[ (AY_{x,m,t}(\tau))^*Z_{x,m,t}(\tau) + D\dfrac{dF}{dm}(Y_{\cdot,m, t}(\tau)\otimes m)(Y_{x,m,t}(\tau))\Bigg] \\
    &\quad+ D\dfrac{dF_T}{dm}(Y_{\cdot,m,t}(T)\otimes (X\otimes m))(Y_{x,m,t}(T)),\nonumber
\end{align}
which is the system of optimality condition for our mean field type control problem in Section \ref{mean_field_type_control}. For the value function, we have
\begin{align}
    V(m,t) = &\int_t^T F(Y_{\cdot, m,t}(s)\otimes m) ds+ F_T(Y_{\cdot,m,t}(T)\otimes m)\\
    &+\dfrac{1}{2}\int_{t}^{T}\int_{\mathbb{R}^n} N^{-1}B^*Z_{x,m,t}(\tau) \cdot B^*Z_{x,m,t}(\tau)dm(x)d\tau. \nonumber
\end{align}
\subsection{PROPERTIES OF THE VALUE FUNCTION}
We give the functional derivative of the value function $V$, and the relation between the solution of the FBSDE and $V$. As the proofs are standard, we omit here and readers may refer to Section 4 of \cite{stochasticv2}.
\begin{prop}
\label{prop_value_function}
Assume (A1), (A2), (A3), (A4). We have the following properties for the value function:
\begin{enumerate}
    \item By Proposition \ref{prop5-1}, we have
    \begin{align}
        D_XV(X\otimes m,t) = Z_{Xt}(t).
    \end{align}
    \item We have
    \begin{align}
        \dfrac{dV}{dm}(m,t)(x) = &\int_t^T \dfrac{dF}{dm}(Y_{\cdot, m,t}(s)\otimes m) (Y_{x,m,s}(s))ds+ \dfrac{dF_T}{dm}(Y_{\cdot,m,t}(T)\otimes m)(Y_{x,m,s}(T))\\
        &+\dfrac{1}{2}\int_{t}^{T} N^{-1}B^*Z_{x,m,t}(\tau) \cdot B^*Z_{x,m,t}(\tau)d\tau. \nonumber
    \end{align}
    \item We have
    \begin{align}
        D\dfrac{d}{dm}V(m,t)(x) &= Z_{x,m,t}(t),\\
        D_X V(X\otimes m,t) &= D\dfrac{d}{dm}V(X\otimes m, t)(X)
    \end{align}
    \item Also, the feedback nature of $Z$ in $Y$, i.e., for any $x\in\mathbb{R}^n$, $\forall s \in [t,T]$, we have 
    \begin{align}
        Z_{x,m,t}(s) = D\dfrac{d}{dm}V(Y_{\cdot,m,t}\otimes m,s)(Y_{x,m,t}(s)),
    \end{align}
    and for any $X\in\mathcal{H}_m$, $\forall s \in [t,T]$,
    \begin{align}
        Z_{Xt}(s) = D_X V(Y_{Xt}(s)\otimes m,s).
    \end{align}
\end{enumerate}
\end{prop}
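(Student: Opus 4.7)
The plan is to establish the four assertions of Proposition \ref{prop_value_function} in sequence, each leveraging a different tool that was already developed earlier in the paper, exactly parallel to Section 4 of \cite{stochasticv2}.

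Assertion 1 is immediate from Proposition \ref{prop5-1} applied in the Hilbert space $\mathcal{H}_m$. Once the functionals $F$, $F_T$ and the operators $A$, $B$, $N$ are lifted as in Section \ref{extending_functions}, assumptions (A1)--(A4) translate into (\ref{eq:1-1})--(\ref{eq:4-7}) with $b=0$ (since the lifted $A$ is linear), so Theorem \ref{theo5-1} applies and gives $\Gamma(X,t) = D_X V(X\otimes m, t) = Z_{Xt}(t)$.

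For assertion 2, I would compute $\frac{dV}{dm}(m,t)(x)$ directly by perturbing $m \mapsto m + \theta(m'-m)$ in the representation
\[
V(m,t) = \int_t^T F(Y_{\cdot,m,t}(s)\otimes m)\,ds + F_T(Y_{\cdot,m,t}(T)\otimes m) + \tfrac{1}{2}\int_t^T\!\!\int_{\mathbb{R}^n} B^*Z_{x,m,t}(\tau)\cdot N^{-1}B^*Z_{x,m,t}(\tau)\,dm(x)\,d\tau,
\]
and differentiating at $\theta=0$. Because $\hat v$ is the unique optimizer (Theorem \ref{theo5-2}), the envelope theorem wipes out the first-order sensitivity through the control. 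The surviving variation splits into (i) the change in the initial distribution, which attributes to a new particle at $x$ exactly the per-particle cost along $Y_{x,m,t}(\cdot)$ (giving the three terms on the right-hand side), and (ii) the change in the mean field argument $Y_{\cdot,m,t}\otimes m$ inside $F$, $F_T$, which telescopes away thanks to the zero-mean normalization $\int\frac{dF}{dm}\,dm=0$ combined with integration by parts against the adjoint equation for $Z_{x,m,t}$.

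Assertion 3 then has two parts. The first identity $D\frac{dV}{dm}(m,t)(x) = Z_{x,m,t}(t)$ follows by differentiating the formula of assertion 2 with respect to $x$: the $x$-derivatives of the $F$-terms reproduce the driver of the backward equation for $Z_{x,m,t}$, and an integration by parts using the forward equation for $Y_{x,m,t}$ collapses everything to the initial value $Z_{x,m,t}(t)$. The second identity is just Proposition \ref{first_order_gateaux} combined with assertion 1, which identifies the G\^ateaux derivative $D_X V$ with the lifted Wasserstein derivative. Assertion 4 is a restart/dynamic-programming argument: for fixed $s\in[t,T]$, the tail $\hat v_{Xt}|_{[s,T]}$ must be the unique optimal control for the subproblem starting at $(Y_{Xt}(s),s)$ (else Theorem \ref{theo5-2} is violated at the later time), so $Z_{Xt}(s) = Z_{Y_{Xt}(s),s}(s) = D_X V(Y_{Xt}(s)\otimes m, s)$ by assertion 1; specializing $X$ to the identity map yields the pointwise version.

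The main obstacle is the rigorous execution of assertion 2: one must justify the interchange of the $m$-derivative with the time integrals while both $Y_{\cdot,m,t}$ and $Z_{\cdot,m,t}$ depend on the perturbed measure through a coupled FBODE. This is where the uniform Lipschitz estimates on the decoupling field $\Gamma$ from Section \ref{subsec:SECOND-ESTIMATE} (and the well-posedness of the linearized system (\ref{eq:2-17})) are essential, ensuring that the formal perturbation calculus is legitimate and that the envelope cancellation is genuine rather than just heuristic.
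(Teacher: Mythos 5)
Your outline is correct and follows essentially the route the paper intends: the paper omits this proof entirely, deferring to Section 4 of \cite{stochasticv2}, and that argument is exactly the combination you describe --- Proposition \ref{prop5-1} applied in $\mathcal{H}_m$ with $b=0$ for assertion 1, the envelope/per-particle-cost computation for the functional derivative, the identification of $D_X V$ with the lifted Wasserstein derivative via Proposition \ref{first_order_gateaux}, and the flow/uniqueness property of the FBODE (already noted in Section 3.2 as $z(s)=\Gamma(y(s),s)$) for assertion 4. Your closing caveat about justifying the interchange of the $m$-perturbation with the coupled FBODE is the right place to locate the only real technical work, and it is indeed handled by the uniform Lipschitz bounds on the decoupling field.
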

\subsection{BELLMAN EQUATION}
Assume (A1), (A2), (A3), (A4). By Theorem \ref{theo5-3}, we deduce that for any $T>0$, $V(X\otimes m,t)$ is the unique solution to the following Bellman equation:
\begin{small}
\begin{align}
\begin{cases}
    -\dfrac{\partial V}{\partial t}(X\otimes m, t) - \Big\langle D_X V(X\otimes m,t), AX \Big\rangle_{\mathcal{H}_m}+\dfrac{1}{2}\Big\langle D_XV(X\otimes m,t),BN^{-1}B^* D_X V(X\otimes m,t) \Big\rangle_{\mathcal{H}_m} = F(X\otimes m),\\
    V(X\otimes m) = F_T(X\otimes m)
\end{cases}
\end{align}
\end{small}%
As before, let $X$ be the identity function, together with Proposition \ref{prop_value_function}, we conclude that for any $T>0$, $V(m,t)$ solves the following PDE on the space of probability measures:
\begin{small}
\begin{align}
\begin{cases}
    -\dfrac{\partial V}{\partial t}(m,t) - \displaystyle\int_{\mathbb{R}^n}D\dfrac{dV}{dm}(m,t)(x)\cdot Ax dm(x)+\displaystyle\dfrac{1}{2}\int_{\mathbb{R}^n}D\dfrac{dV}{dm}(m,t)(x)\cdot BN^{-1}B^*D\dfrac{dV}{dm}(m,t)(x) dm(x)= F(m),\\
    V(m,T) = F_T(m).
\end{cases}
\end{align}
\end{small}

\end{document}